\newcommand{\RNum}[1]{\lowercase\expandafter{\romannumeral #1\relax}}
\newtheorem{thm}{Theorem}[section]
\newtheorem{lem}[thm]{Lemma}
\newtheorem{conj}[thm]{Conjecture}
\newtheorem{thm-con}[thm]{Theorem-Conjecture}
\numberwithin{equation}{section}
\theoremstyle{definition}
\newcommand{\F}{\mathbb F}
\def\Tr{{\rm Tr}}
\begin{document}
\title[Permutation Trinomials]{\Large A proof of a conjecture on permutation trinomials}
\author{Daniele Bartoli}
\address{Department of Mathematics and Computer Science, University of Perugia,  06123 Perugia, Italy}
\email{daniele.bartoli@unipg.it}
\author{Mohit Pal}
\address{Department of Informatics, University of Bergen, PB 7803, N-5020,
Bergen, Norway}
\email{Mohit.Pal@uib.no}
\author{Pantelimon St\u anic\u a}
\address{Applied Mathematics Department, Naval Postgraduate School,
Monterey, CA 93943, USA}
\email{pstanica@nps.edu}

\date{\today}

\keywords{Finite fields,  permutation polynomials, varieties, irreducible components}
\subjclass[2020]{11G20, 11T06, 12E20, 14Q10}

\begin{abstract}
In this paper we use algebraic curves and other algebraic number theory methods to show the validity of a permutation polynomial conjecture regarding $f(X)=X^{q(p-1)+1} +\alpha X^{pq}+X^{q+p-1}$, on finite fields $\F_{q^2}, q=p^k$, from [A. Rai, R. Gupta, {\it Further results on a class of permutation trinomials}, Cryptogr. Commun. 15 (2023),   811--820]. 
\end{abstract}

\maketitle
 
\section{Introduction}

Let $\F_q$ be the finite field with $q=p^k$ elements. We denote by $\F_q^*$ the multiplicative group of nonzero elements of $\F_q$ and by $\F_q[X]$ the polynomial ring in the indeterminate $X$ over a finite field $\F_q$. A polynomial $f \in \F_q[X]$ is called a permutation polynomial if the equation $f(X)=a$ has exactly one solution in $\F_q$ for each $a\in \F_q$.

Permutation polynomials with a few terms are of great importance due to their applications in cryptography and coding theory. The classification of the simplest kind of permutation polynomials, i.e., monomials is well known. However, a complete classification of permutation binomials and trinomials is not yet known. A lot of research has been done in this direction in recent years and the reader may refer to~\cite{HouBT}, and the references therein. 

Permutation trinomials with Niho exponents over $\F_{q^2}$, that is, $F(x)=x+\alpha x^{s_1(q-1)+1}+\beta x^{s_2(q-1)+1},$ where $s_1,s_2$ are integers and coefficients $\alpha,\beta\in\{-1,1\}$ have been studied extensively, and the reader can easily find many references on these polynomials.
An interesting problem in this direction is to determine the permutation behaviour of  polynomials of the form
\begin{equation}\label{palpha}
f_{\alpha, \beta}(X)= X^{q(p-1)+1}+ \alpha X^{pq}+\beta X^{q+p-1} 
\end{equation}
over $\F_{q^2}$, where $q=p^k$ for some positive integer $k$ and odd prime $p$; and $\alpha\beta \in \F_q^*$. 
In~\cite{Houp3}, Hou completely characterized the permutation behaviour  of the polynomial $f_{\alpha, \beta}$ when the characteristic of the underlying finite field is $3$. Bai and Xia~\cite{BX18} looked at the permutation property of the polynomial $f_{1,-1}$ over $\F_{q^2}$, $q=p^k$, for $p=3,5$, showing that it is a permutation  if and only if $k$ is even.
In~\cite{GR_JAA22}, Gupta and Rai considered the permutation behavior of $f_{\alpha,1}$ when the characteristic of the underlying finite field is $5$ and showed that when $k>1$ then $f_{\alpha,1}$ is a permutation polynomial if and only if $\alpha= -1$ and $k$ is even. In a subsequent article~\cite{rai} the authors showed that in the case of characteristic $7$, $f_{\alpha, 1}$ is a permutation polynomial if and only if either $\alpha=-3$ and $k=1$; or $\alpha=-1$ and $k=2$. Moreover, the authors showed that when $p>3$ and $k=1$ then $f_{\alpha, 1}$ is a permutation polynomial if and only if $\alpha=-3$. For $p>7$ and $k>1$, the authors proposed the following conjecture.

\begin{conj}
\label{Conj}
Let $q=p^k$, where $p>7$ is a prime, $k>1$. Then, for $\alpha \in \F_q^*$ and $k>1$, the trinomial
\[
f(X)=X^{q(p-1)+1}+ \alpha X^{pq}+X^{q+p-1} 
\]
is a permutation polynomial over $\F_{q^2}$ if and only if $\alpha = -1$ and $k=2$. 
\end{conj}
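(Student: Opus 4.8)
The plan is to convert the permutation property of $f$ into a question about a rational map on the norm-one subgroup $\mu_{q+1}=\{y\in\F_{q^2}^*:y^{q+1}=1\}$, and then to destroy (almost) every non-permutation case by producing $\F_q$-rational points on an associated curve through the Hasse--Weil bound. First I would exploit the exponents: since $q\equiv 1\pmod{q-1}$, all three exponents of $f$ are $\equiv p\pmod{q-1}$, and one finds
\[
f(X)=X^{p}\,h(X^{q-1}),\qquad h(Y)=\alpha Y^{p}+Y^{p-1}+Y.
\]
Because $\gcd(p,q-1)=1$, the Park--Lee/Zieve criterion says $f$ permutes $\F_{q^2}$ if and only if $g(Y):=Y^{p}h(Y)^{q-1}$ permutes $\mu_{q+1}$. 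On $\mu_{q+1}$ one has $y^q=y^{-1}$, and a short computation using $\alpha\in\F_q$ collapses $g$ to the closed form
\[
g(y)=\frac{y^{p-1}+y+\alpha}{\alpha y^{p}+y^{p-1}+y}=\frac{N(y)}{D(y)},
\]
where crucially $D(Y)=Y^{p}N(1/Y)$ is the reciprocal of $N$; this reciprocity is exactly what forces $g(\mu_{q+1})\subseteq\mu_{q+1}$.

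Next, $g$ fails to permute $\mu_{q+1}$ precisely when the plane curve
\[
\mathcal{C}:\quad \frac{N(Y)D(Z)-N(Z)D(Y)}{Y-Z}=0
\]
has a point $(y,z)$ with $y,z\in\mu_{q+1}$ and $y\neq z$. To turn the membership condition $y,z\in\mu_{q+1}$ into a rationality condition, I would fix $\theta\in\F_{q^2}$ with $\theta^q=-\theta$ and use the Cayley-type parametrization $Y=\tfrac{s+\theta}{s-\theta}$, which is a bijection $\mathbb{P}^1(\F_q)\to\mu_{q+1}$. Substituting this and its analogue in $Z$, the curve $\mathcal{C}$ descends to a curve $\widetilde{\mathcal{C}}$ defined over $\F_q$ (by invariance under the simultaneous Frobenius $(Y,Z)\mapsto(Y^{-1},Z^{-1})$), and $g$ permutes $\mu_{q+1}$ if and only if $\widetilde{\mathcal{C}}$ has no affine $\F_q$-point off the diagonal $s=t$. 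The entire problem is now a question about $\F_q$-points on an $\F_q$-curve of degree at most $2p-2$.

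The heart of the argument, and the step I expect to be hardest, is to prove that for the relevant parameters $\widetilde{\mathcal{C}}$ (equivalently $\mathcal{C}$) carries an absolutely irreducible component defined over $\F_q$ that is not contained in the diagonal. I would analyze the homogenization of $\mathcal{C}$ together with its behaviour at the points at infinity and at the base points where $Y,Z\in\{0,\infty\}$ or are roots of $N,D$, using a Newton-polygon and tangent-cone study of the singularities and tracking the dependence on $\alpha$ and $p$. The delicate feature is that the exceptional locus must surface exactly at $\alpha=-1$: I expect the component structure (or the genus) to degenerate there, so that the otherwise-present absolutely irreducible component either splits off or retains only diagonal/trivial $\mu_{q+1}$-points. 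One must simultaneously keep the degree and genus $\mathfrak g$ controlled, since the Hasse--Weil estimate only becomes decisive once $q$ is large relative to $\mathfrak g$.

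Finally I would assemble the conclusion. For $\alpha\neq-1$ (and, when $\alpha=-1$, for $k\ge 3$), once absolute irreducibility of a suitable component over $\F_q$ is established, Hasse--Weil furnishes at least $q+1-2\mathfrak g\sqrt{q}$ $\F_q$-points on it; for $q=p^{k}$ large enough this exceeds the $O(p)$ points lying on the diagonal or over the excluded values, yielding a genuine collision $g(y)=g(z)$ with $y\neq z$ and hence non-permutation. The cases left uncovered by the numerical bound, most importantly $k=2$ and the borderline small fields, I would settle by a finer, essentially direct analysis: verifying that for $\alpha=-1,\ k=2$ the map $g$ really is a bijection of $\mu_{p^2+1}$ (the permutation asserted by the conjecture), and disposing of the remaining finitely many residual $(\alpha,p,k)$ using the actual irreducible components of $\mathcal{C}$ rather than the crude degree bound. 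The principal obstacle throughout is the absolute-irreducibility analysis, intertwined with pinning down exactly why $k=2$ is special.
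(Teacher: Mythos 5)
Your reduction to $\mu_{q+1}$, the difference curve, the Cayley-type descent to an $\F_q$-curve, the absolute-irreducibility analysis of its singularities, and the Hasse--Weil conclusion reproduce exactly the paper's argument --- but in the paper that argument only settles $k\geq 4$, and the genuine gap is your treatment of what remains. The curve has degree about $p-1$, so any Weil-type bound carries a defect term of order $(p-2)(p-3)\sqrt{q}\approx p^2\sqrt{q}$, which exceeds $q=p^k$ whenever $k\leq 3$; this is precisely why the paper's inequality $p^{k}+1-(p-2)(p-3)p^{k/2}-2(p-1)>0$ needs $k\geq 4$. Hence $k=2$ and $k=3$ are not ``borderline small fields'' or ``finitely many residual $(\alpha,p,k)$'': they are infinite families (all primes $p>7$, all $\alpha\in\F_q^*$) on which the curve method is inconclusive no matter how precisely you know the components --- even an absolutely irreducible component of degree about $p-1$ can have genus of order $p^2$, and nothing forces the genus here to be smaller than $\sqrt{q}/2$, which is what your final step would require. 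The paper spends more than half its length on exactly these cases using entirely different tools: the rewriting $f(X)=(\alpha X^p+\Tr(X^{q+p-1}))^q$, the Coulter--Henderson root count for trinomials $X^{p^n}-AX-B$ (Lemma~\ref{L1}) to parametrize all preimages as $X=-B/2+\lambda$, reduction to the degree-$(p+2)$ equation~\eqref{eq:fin3}, an explicit Cardano-type solution of the resulting cubic with a Galois-descent analysis of its roots, a quadratic-character (Weil sum) estimate for $k=3$, and a separate direct uniqueness argument proving that $\alpha=-1$, $k=2$ genuinely is a permutation for every prime $p$. None of this machinery, nor any substitute for it, appears in your proposal; in particular ``verifying'' the $k=2$, $\alpha=-1$ bijection is itself a claim about infinitely many primes, not a finite check.

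There is also a conceptual error in the step you flag as the heart of the matter: you expect the component structure or genus of the curve to ``degenerate'' at $\alpha=-1$, so that the exceptional case surfaces geometrically. It does not. The paper proves $\mathcal{C}_{\alpha}$ is absolutely irreducible for \emph{every} admissible $\alpha$ (all $\alpha\neq 0,-2$, including $\alpha=-1$); what singles out $(\alpha,k)=(-1,2)$ is invisible to the geometry of the curve and emerges only from the arithmetic of $\F_{p^4}$ in the direct analysis. So the plan to isolate the exceptional case through the curve's irreducible components would fail even in principle.
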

It is the intent of our paper to completely prove this conjecture. 


\section{Preliminaries}
We now recall some basic facts on curves/surfaces over (finite) fields. For more details, we refer  to~\cite{Ha77,HKT13}, or the reader's favorite algebraic geometry book. As customary, for a field $\F$, we denote by $\overline{\F}$ its algebraic closure, and by $\mathbb{P}^m(\F)$ (respectively, $\mathbb{A}^m(\F)$)  the $m$-dimensional projective (respectively, affine) space over the field $\F$. Solutions of polynomial equations (systems) form what we call algebraic hypersurfaces (varieties). An algebraic hypersurface defined over a field $\mathbb{F}$ is called absolutely irreducible if the associated polynomial is irreducible over every algebraic extension of $\mathbb{F}$. An absolutely irreducible $\mathbb{F}$-rational component of a hypersurface defined by a polynomial $F$ is  an absolutely irreducible hypersurface, associated to a factor of $F$ defined over $\overline{\F}$.

In two dimensions, $\mathcal{C}$ is an affine curve over a field $\F$ if it is the zero set of a polynomial $F(X,Y)\in\F[X,Y]$. A projective curve $\mathcal{C}$  over a field $\F$ is the zero set of a homogeneous polynomial $F(X,Y,Z)\in\F[X,Y,Z]$.  The polynomial $F$ is the defining polynomial of $\mathcal{C}$.

Given two plane curves $\mathcal{A}$ and $\mathcal{B}$ and a point $P$ on the plane, the \emph{intersection number} $I(P, \mathcal{A} \cap \mathcal{B})$ of $\mathcal{A}$ and $\mathcal{B}$ at the point $P$ can be defined by seven axioms. We do not include its precise and long definition here. For more details, we refer to~\cite{MR1042981} and~\cite{HKT13} where the intersection number is defined equivalently in terms of local rings and in terms of resultants, respectively. Intuitively, the intersection number counts the number of times the two curves intersect at $P$.

Consider an affine point $P=(\alpha,\beta)$ of a plane curve $\mathcal{C}: F(X,Y)=0$.
We write 
\begin{equation}\label{Eq:multiplicty}
F(X+\alpha,Y+\beta)=F_0(X,Y)+F_{1}(X,Y)+\cdots +F_d(X,Y),
\end{equation}
where each $F_i(X,Y)$ is either the 0-polynomial or homogeneous of degree $i$. The multiplicity  $m_P(\mathcal{C})$ of $\mathcal{C}$ at $P$ is defined as the smallest integer $i$ such that $F_i(X,Y)\neq 0$ in~\eqref{Eq:multiplicty}. A singular point of multiplicity $m$ is said to be an $m$-fold singular point. For an  $m$-fold singular point $P=(\alpha,\beta)$, let $\ell(X,Y)$ be a linear factor of $F_m(X,Y)$ in~\eqref{Eq:multiplicty}. Then $\ell(X-\alpha,Y-\beta)=0$ is called a tangent line of $\mathcal{C}$ at $P$ and it intersects $\mathcal{C}$ with multiplicity larger than $m$ at $P$. An $m$-fold singular point is said to be ordinary if all the tangent lines at it are distinct (i.e. $F_m(X,Y)$ is separable).

Concerning the intersection number of two curves at a point, the following  classical result  can be found in most textbooks on algebraic curves.
\begin{lem}\label{le:ordinary_singular}
 	Let $\mathcal{A}$ and $\mathcal{B}$ be two plane curves.  For any affine point $P$, the intersection number satisfies the inequality
 	\[ I(P, \mathcal{A}\cap \mathcal{B})\ge m_P(\mathcal{A}) m_P(\mathcal{B}), \]
	with equality if and only if the tangents at $P$ to $\mathcal{A}$ are all distinct from the tangents at $P$ to $\mathcal{B}$.
 \end{lem}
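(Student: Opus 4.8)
The plan is to abandon the axiomatic description of the intersection number and work instead with its algebraic incarnation in the local ring, namely $I(P,\mathcal{A}\cap\mathcal{B})=\dim_{\overline{\F}}\mathcal{O}_P/(F,G)$, where $F,G$ are the defining polynomials of $\mathcal{A},\mathcal{B}$, where $\mathcal{O}_P$ is the local ring of the affine plane at $P$, and where $\mathfrak{m}$ denotes its maximal ideal. First I would translate so that $P=(0,0)$, and dispose of the case in which $F$ and $G$ share a common component through $P$: there $I(P,\mathcal{A}\cap\mathcal{B})=\infty$, so the inequality is trivial, while the shared component supplies a common tangent, so the equality clause holds vacuously on both sides. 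Hence I may assume $\dim_{\overline{\F}}\mathcal{O}_P/(F,G)<\infty$, which is equivalent to $\mathfrak{m}^{t}\subseteq(F,G)$ for all sufficiently large $t$.

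Writing $m=m_P(\mathcal{A})$ and $n=m_P(\mathcal{B})$ for the orders of $F$ and $G$ at the origin, I would set up the exact sequence of $\overline{\F}$-vector spaces
\[
\frac{\mathcal{O}_P}{\mathfrak{m}^{n}}\oplus\frac{\mathcal{O}_P}{\mathfrak{m}^{m}}\xrightarrow{\ \psi\ }\frac{\mathcal{O}_P}{\mathfrak{m}^{m+n}}\longrightarrow\frac{\mathcal{O}_P}{(\mathfrak{m}^{m+n},F,G)}\longrightarrow 0,\qquad \psi(\bar a,\bar b)=\overline{aF+bG},
\]
which is well defined precisely because $F\in\mathfrak{m}^{m}$ and $G\in\mathfrak{m}^{n}$. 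Combining this with the canonical surjection $\mathcal{O}_P/(F,G)\twoheadrightarrow\mathcal{O}_P/(\mathfrak{m}^{m+n},F,G)$ and the standard dimension formula $\dim_{\overline{\F}}\mathcal{O}_P/\mathfrak{m}^{t}=\binom{t+1}{2}$, a short count gives
\[
I(P,\mathcal{A}\cap\mathcal{B})\ \ge\ \binom{m+n+1}{2}-\binom{n+1}{2}-\binom{m+1}{2}\ =\ mn,
\]
which is the asserted inequality.

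For the equality statement I would examine when the two estimates above are simultaneously sharp, replacing the degree-$(m+n)$ truncation by the analogous map $\psi_t$ into $\mathcal{O}_P/\mathfrak{m}^{t}$ for $t$ large enough that $\mathfrak{m}^{t}\subseteq(F,G)$, so that the surjection becomes an isomorphism and $I(P,\mathcal{A}\cap\mathcal{B})=\dim\mathcal{O}_P/\mathfrak{m}^{t}-\dim\operatorname{im}\psi_t$ is computed exactly. The decisive point is the kernel of $\psi_t$: comparing lowest-degree forms, a relation $aF+bG\in\mathfrak{m}^{t}$ forces a nontrivial identity $AF_m+BG_n=0$ between the leading forms $F_m,G_n$ (the tangent cones), with $A,B$ forms of degrees $<n$ and $<m$. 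Since the tangent lines at $P$ are exactly the linear factors of $F_m$ and of $G_n$, such forms are coprime precisely when $\mathcal{A}$ and $\mathcal{B}$ have no common tangent at $P$; and a degree count shows $AF_m+BG_n=0$ has no nonzero bounded-degree solution exactly in the coprime case. In that case the only syzygies are the Koszul ones generated by $(G,-F)$, the kernel attains its minimal dimension $\binom{t-m-n+1}{2}$, and the bookkeeping collapses to $I(P,\mathcal{A}\cap\mathcal{B})=mn$; if instead $F_m$ and $G_n$ share a factor $H$, the pair $(G_n/H,\,-F_m/H)$ provides a relation not accounted for by Koszul, so $\ker\psi_t$ strictly exceeds $\binom{t-m-n+1}{2}$ and hence $I(P,\mathcal{A}\cap\mathcal{B})>mn$.

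The main obstacle I anticipate is precisely this equality clause: the inequality is a one-line consequence of the exact sequence, but pinning down $\dim\ker\psi_t$ — proving that coprimality of the tangent cones yields exactly the Koszul syzygies, while a common factor lifts to a genuinely independent kernel element modulo $\mathfrak{m}^{t}$ — is the delicate part, and it is where the hypothesis that the tangents are distinct is actually used. All remaining ingredients are the routine dimension bookkeeping with binomial coefficients indicated above.
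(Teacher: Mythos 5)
The paper never proves this lemma: it quotes it as a classical fact with pointers to Fulton's \emph{Algebraic Curves} and to Hirschfeld--Korchm\'aros--Torres, so your proposal must be measured against the standard local-ring proof (essentially Fulton's Property (5) of intersection numbers), which is exactly the route you chose. Your reduction to the finite-colength case, the exact sequence, and the dimension count giving $I(P,\mathcal{A}\cap\mathcal{B})\ge mn$ are correct. The direction ``no common tangent $\Rightarrow$ equality'' is also sound as you set it up, with one caveat: for large $t$ the kernel of $\psi_t$ is \emph{not} killed in one step by a degree count (the Koszul elements are genuine nonzero kernel elements, and the lowest forms of $a,b$ have degrees up to $t-m-1$ and $t-n-1$, not $n-1$ and $m-1$); the correct argument is an iteration --- from $A_rF_m+B_sG_n=0$ and $\gcd(F_m,G_n)=1$ write $A_r=G_nC$, $B_s=-F_mC$, subtract the Koszul element $(GC,-FC)$, and repeat until both orders exceed $t-m$, $t-n$ --- which shows $\ker\psi_t$ is exactly the Koszul subspace of dimension $\binom{t-m-n+1}{2}$.

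The genuine gap is the converse direction. Your witness $(G_n/H,\,-F_m/H)$ is in general \emph{not} in $\ker\psi_t$ for large $t$: cancelling the tangent cones only makes $(G_n/H)F-(F_m/H)G$ have order at least $m+n+1-\deg H$, which is far below $t$. Concretely, take $F=Y-X^2$, $G=Y$, so $m=n=1$ and $H=Y$; your pair is $(1,-1)$, and $1\cdot F+(-1)\cdot G=-X^2\notin\mathfrak{m}^t$ for $t\ge 3$, so it is not a kernel element, even though here $I(P,\mathcal{A}\cap\mathcal{B})=\dim\mathcal{O}_P/(X^2,Y)=2>1=mn$ (the actual extra kernel element is $(X^{t-2},-X^{t-2})$). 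Two repairs are available. (a) Run this direction at the truncation level $m+n$, i.e.\ with your original $\psi$: there the Koszul subspace vanishes (as $\mathcal{O}_P/\mathfrak{m}^{t-m-n}=0$ when $t=m+n$), your pair \emph{is} in $\ker\psi$ because $m+n+1-\deg H\ge m+n$, it is nonzero in the domain because $\deg(G_n/H)<n$, and then $I(P,\mathcal{A}\cap\mathcal{B})\ge \dim\mathcal{O}_P/(\mathfrak{m}^{m+n},F,G)=mn+\dim\ker\psi>mn$; this is the classical route and costs nothing. (b) Keep the large-$t$ truncation but use $(cG_n/H,\,-cF_m/H)$ with $c$ a form of degree $t-m-n-1+\deg H$ having no linear factor in common with $H$ (such $c$ exists since the field is algebraically closed); the product then lies in $\mathfrak{m}^t$, and comparing lowest-degree forms shows this pair cannot equal any Koszul element $(\overline{Gd},-\overline{Fd})$, since that would force $H\mid c$. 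Note that in either repair the ``not accounted for by Koszul'' claim is something you must actually verify --- it is automatic in (a) and requires the leading-form comparison in (b); as written, the ``common tangent implies strict inequality'' half of your proof does not go through.
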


We now recall B\'ezout Theorem~\cite[Theorem 3.13]{HKT13}, which will be used later. 
\begin{thm}[B\'ezout Theorem]
Let $\mathcal{C}_1,\mathcal{C}_2$ be two projective plane curves of degrees $d_1$, respectively, $d_2$. If $\mathcal{C}_1$ and  $\mathcal{C}_2$ do not have a common component, then the sum of the intersection multiplicities at their common points is
\[
\sum_{P\in\mathcal{C}_1\cap \mathcal{C}_2} I\left(P,\mathcal{C}_1\cap \mathcal{C}_2\right)=d_1d_2.
\]
\end{thm}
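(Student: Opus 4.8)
The plan is to prove B\'ezout's Theorem via the theory of resultants, the approach compatible with the definition of intersection number recalled above. Write $\mathcal{C}_1$ and $\mathcal{C}_2$ as the zero sets of homogeneous polynomials $F,G\in\F[X,Y,Z]$ of degrees $d_1,d_2$. The first step is a normalization of coordinates: after a projective change of variables (available after passing to $\overline{\F}$), I would arrange that the point $[0:1:0]$ lies on neither curve and that no line through $[0:1:0]$ passes through more than one common point of $\mathcal{C}_1\cap\mathcal{C}_2$. Since only finitely many choices are excluded, a good choice exists. This guarantees that, viewed as polynomials in $Y$, both $F$ and $G$ have leading coefficient a nonzero constant, of degrees exactly $d_1$ and $d_2$.

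Next I would form the resultant $R(X,Z)=\mathrm{Res}_Y(F,G)$ with respect to $Y$. Standard properties of the resultant show that $R$ is homogeneous of degree $d_1 d_2$ in $X,Z$, and that $R\not\equiv 0$ precisely because $F$ and $G$ share no common factor, i.e. $\mathcal{C}_1$ and $\mathcal{C}_2$ have no common component. Over $\overline{\F}$, $R$ therefore factors as a product of $d_1 d_2$ linear forms, counted with multiplicity. A root $[x_0:z_0]$ of $R$ corresponds to a line $\ell$ through $[0:1:0]$ on which $F$ and $G$ have a common zero, hence to a common point of the two curves.

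The crux, and the step I expect to be the main obstacle, is to show that the multiplicity of the factor $(z_0 X - x_0 Z)$ in $R$ equals $\sum_{P\in\ell} I(P,\mathcal{C}_1\cap\mathcal{C}_2)$, the sum of intersection numbers at the common points lying on $\ell$. Because of the normalization, each such line carries exactly one common point, so this reduces to matching the local order of vanishing of the resultant at a single point with the locally defined intersection number. This is precisely where one must invoke (or reprove) the equivalence between the resultant-based and the local-ring-based definitions of $I(P,\mathcal{C}_1\cap\mathcal{C}_2)$, dehomogenizing and analyzing $\mathrm{Res}_Y(F,G)$ as a power series in the affine coordinate transverse to $\ell$.

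Granting this local identity, summing over all $d_1 d_2$ linear factors of $R$ yields $\sum_{P} I(P,\mathcal{C}_1\cap\mathcal{C}_2)=\deg R=d_1 d_2$, which is the assertion. The remaining bookkeeping is to check that no intersection point is lost ``at infinity'' in the chosen frame, handled by the requirement $[0:1:0]\notin\mathcal{C}_1\cup\mathcal{C}_2$, and that the count is unchanged by the coordinate change, which follows from the projective invariance of both the resultant degree and the intersection number.
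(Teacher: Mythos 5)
The paper never proves this statement: B\'ezout's Theorem is recalled as a classical result with a citation to \cite[Theorem 3.13]{HKT13}, so there is no internal proof to compare yours against. Your resultant-based argument is the standard one, and it is in fact essentially the proof of the cited reference, where the intersection number is \emph{defined} in terms of resultants. Two caveats. First, the step you yourself flag as the crux --- that the multiplicity of the linear factor $z_0X-x_0Z$ in $\mathrm{Res}_Y(F,G)$ equals $\sum_{P\in\ell} I\left(P,\mathcal{C}_1\cap\mathcal{C}_2\right)$ --- is exactly where all the content lives: under the resultant definition of the intersection number (as in \cite{HKT13}) it is nearly definitional, whereas under the local-ring definition (as in \cite{MR1042981}) it is a genuine theorem whose proof you defer. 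Since the paper's preliminaries explicitly assert that the two definitions are equivalent, invoking that equivalence is legitimate in this context, but your write-up is then a proof \emph{modulo} that equivalence rather than a self-contained argument. Second, there is a mild ordering problem in your normalization: to know that a good center of projection $[0:1:0]$ exists (avoiding both curves and every line through two distinct common points), you must already know that $\mathcal{C}_1\cap\mathcal{C}_2$ is finite. This finiteness is itself a consequence of the non-vanishing of the resultant (a line in the zero locus of $\mathrm{Res}_Y(F,G)$ cannot lie on both curves, else they share a component, so it meets at least one of them in finitely many points), and should be established \emph{before} the choice of coordinates rather than tacitly assumed by it. Both points are repairable, so the outline is sound; it simply needs the finiteness lemma stated first and an honest statement of which definition of $I(P,\mathcal{C}_1\cap\mathcal{C}_2)$ you take as primitive.
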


Finally, we will make use of the following version of the celebrated Hasse-Weil theorem. 
\begin{thm}[Aubry-Perret bound \textup{\cite[Corollary 2.5]{AubryPerret}}]
\label{Th:AubryPerret}
Let $\mathcal{C}\subset \mathbb{P}^{n}(\mathbb{F}_q)$ be an absolutely irreducible curve which is a complete intersection of $(n-1)$ hypersurfaces of degrees $d_1,\ldots, d_{n-1}$ and set $d=\prod_{i=1}^{n-1}d_i$. Then the number $\mathcal{C}(\mathbb{F}_q)$ of $\mathbb{F}_q$-rational points of $\mathcal{C}$ in $ \mathbb{P}^{n}(\mathbb{F}_q)$ satisfies

\vspace*{-0.5 cm}
\begin{equation}\label{EQ:HW3}
q+1-(d-1)(d-2)\sqrt{q}\leq \#\mathcal{C}(\mathbb{F}_q)\leq q+1+(d-1)(d-2)\sqrt{q}.
\end{equation}
\end{thm}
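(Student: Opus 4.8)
The statement follows from the Hasse–Weil bound applied to the smooth model of $\mathcal{C}$, combined with a genus estimate special to complete intersections. The plan is as follows. First I would pass to the normalization $\nu\colon \widetilde{\mathcal{C}}\to\mathcal{C}$; since $\mathcal{C}$ is absolutely irreducible, $\widetilde{\mathcal{C}}$ is a smooth, absolutely irreducible projective curve over $\mathbb{F}_q$, of geometric genus $g$ (the genus of $\mathcal{C}$). The classical Hasse–Weil theorem then gives $|\#\widetilde{\mathcal{C}}(\mathbb{F}_q)-(q+1)|\le 2g\sqrt{q}$. It therefore remains (i) to control the difference between $\#\mathcal{C}(\mathbb{F}_q)$ and $\#\widetilde{\mathcal{C}}(\mathbb{F}_q)$ coming from the singular points, and (ii) to bound the relevant genus by $(d-1)(d-2)/2$.

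For step (ii) I would use that the arithmetic genus $p_a$ of a complete intersection depends only on the multidegree. By the adjunction formula, the canonical sheaf of the complete intersection curve $\mathcal{C}$ is $\mathcal{O}_{\mathcal{C}}\!\left(\sum_{i=1}^{n-1} d_i-n-1\right)$, so that, using $\deg\mathcal{C}=\prod_{i=1}^{n-1}d_i=d$ by B\'ezout's theorem,
\[
2p_a-2=\Big(\sum_{i=1}^{n-1}d_i-n-1\Big)\,d.
\]
The elementary inequality $(x-1)(y-1)\ge 0$ for $x,y\ge 1$ iterates to $\prod_{i=1}^{n-1}d_i\ge \sum_{i=1}^{n-1}d_i-(n-2)$, i.e. $\sum_{i=1}^{n-1}d_i\le d+n-2$. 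Substituting gives $2p_a\le d(d-3)+2=(d-1)(d-2)$. Since the geometric genus satisfies $g\le p_a$, one already obtains the desired bound for the smooth model with $(d-1)(d-2)$ in place of $2g$.

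The main obstacle is step (i): on a singular curve $\#\mathcal{C}(\mathbb{F}_q)$ need not equal $\#\widetilde{\mathcal{C}}(\mathbb{F}_q)$, because over an $\mathbb{F}_q$-rational singular point the fibre of $\nu$ may consist of several points, rational or not. Here I would follow Aubry–Perret and compare the zeta functions of $\mathcal{C}$ and $\widetilde{\mathcal{C}}$, whose quotient is a product of explicit local factors supported at the singular points; the total degree of this quotient is $2(p_a-g)=2\delta$, the global delta-invariant. A careful accounting of these local factors shows that the singular contributions are absorbed, so that the bound persists with the arithmetic genus in place of the geometric one, namely $|\#\mathcal{C}(\mathbb{F}_q)-(q+1)|\le 2p_a\sqrt{q}$. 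Combining this with the estimate $2p_a\le(d-1)(d-2)$ from step (ii) yields \eqref{EQ:HW3}. The delicate point throughout is precisely the passage from $g$ (for which Hasse–Weil is immediate) to $p_a$, that is, verifying that the singular points cannot contribute more rational points than the genus defect $2\delta\sqrt{q}$ permits.
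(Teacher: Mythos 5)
This theorem is nowhere proved in the paper: it sits in the Preliminaries as a quoted result, with the proof deferred entirely to the cited source (Aubry--Perret, Corollary 2.5). So there is no internal proof to compare you against; what you have produced is a reconstruction of the cited source's own argument, and in its main lines it is correct and is indeed the same route Aubry--Perret take: Hasse--Weil on the normalization $\widetilde{\mathcal{C}}$; the adjunction/dualizing-sheaf computation $2p_a-2=d\bigl(\sum_{i}d_i-n-1\bigr)$ for a complete intersection, combined with the elementary inequality $\sum_i d_i\le d+n-2$, to get $2p_a\le (d-1)(d-2)$; and a zeta-function comparison between $\mathcal{C}$ and $\widetilde{\mathcal{C}}$ to absorb the singular points. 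Two remarks on the details you left implicit. First, your quantitative claim about the quotient of zeta functions is off: the polynomial $\prod_{P}\prod_{Q\to P}(1-T^{\deg Q})/(1-T^{\deg P})$ (product over singular closed points $P$ and closed points $Q$ of $\widetilde{\mathcal{C}}$ above them) has degree $\sum_P\bigl(\sum_{Q\to P}\deg Q-\deg P\bigr)$, which is bounded above by $\delta=p_a-g$ but is in general neither $\delta$ nor $2\delta$ (a cusp contributes $0$ while $\delta_P=1$). The slip is harmless: each inverse root of that factor is a root of unity, so the point count gives $|\#\mathcal{C}(\mathbb{F}_q)-(q+1)|\le 2g\sqrt{q}+\delta\le 2p_a\sqrt{q}$, exactly what you need, since $\sqrt{q}\ge 1$. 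Second, the genuinely nontrivial input in step (i) is the local inequality that the number of points of $\widetilde{\mathcal{C}}$ over a singular point exceeds $1$ by at most the local delta invariant; you correctly flag this as the delicate point rather than assert it is automatic, which is the right assessment of where the work lies.
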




 We shall also use the following result of~\cite{CH04} (we make some small changes to fit our purpose).
\begin{lem}\textup{\cite[Theorem 3]{CH04}}
\label{L1}
 Let $p$ be a prime, $\ell$ be a non-negative integer, and the polynomial $F(X) = X^{p^n}-AX-B \in \F_{p^\ell}[X]$, $A \neq 0$. Let $d= \gcd(\ell,n)$, $m=\ell/d$ and $\Tr_d$ be the relative trace from $\F_{p^\ell}$ to $\F_{p^d}$. For $0\leq i \leq m-1$, define $\displaystyle t_i=\sum_{j=i}^{m-2} p^{n(j+1)}$. Put $\alpha_0 = A$ and $\beta_0=B$. If $m>1$, then for $1\leq r \leq m-1$, we let $\displaystyle \alpha_r 
 =A^{\frac{p^{n(r+1)}-1}{p^n-1}}$ and 
 $\displaystyle \beta_r = \sum_{i=0}^{r}A^{s_i} B^{p^{n i}}$ where $\displaystyle s_i = \sum_{j=i}^{r-1} p^{n(j+1)}$, for $0 \leq i \leq r-1$, and $s_r =0$.
 \begin{enumerate}[(i)]
  \item If $\alpha_{m-1} = 1$ and $\beta_{m-1} \neq 0$ then the trinomial $F$ has no roots in $\F_{p^n}$.
  \item If $\alpha_{m-1}\neq 1$ then $F$ has a unique root, namely $x = \dfrac{\beta_{m-1}}{1-\alpha_{m-1}}$. 
  \item If $\alpha_{m-1} = 1, \beta_{m-1} = 0$, $F$ has $p^d$ roots in $\F_{p^\ell}$  given by $x+\delta\tau$, where $\delta \in \F_{p^d}$, $\tau$ is fixed in $\F_{p^\ell}$ with $\tau^{p^n-1} = A$ (that is, a $(p^n-1)$-root of $A$), and, for any $c \in \F_{p^\ell}^{*}$, satisfying $\Tr_d (c)\neq 0$ then $\displaystyle x = \frac{1}{\Tr_d (c)} \sum_{i=0}^{m-1} \left(\sum_{j=0}^{i} c^{p^{nj}}\right) A^{t_i} B^{p^{ni}}$. 
 \end{enumerate}
 \end{lem}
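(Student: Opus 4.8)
The plan is to linearize $F$ by iterating the $p^n$-power Frobenius and to read off the three cases from the size of the kernel of its additive part. Write $L(X)=X^{p^n}-AX$, so that $F=L-B$. Since $(x+y)^{p^n}=x^{p^n}+y^{p^n}$, the map $L$ is additive and sends $\F_{p^\ell}$ to itself; consequently the set of roots of $F$ in $\F_{p^\ell}$, if nonempty, is a coset of $\ker L\cap\F_{p^\ell}$. Because nonzero elements of this kernel satisfy $x^{p^n-1}=A$, and $\gcd(p^n-1,p^\ell-1)=p^d-1$, the kernel has either $1$ or $p^d$ elements. The whole statement therefore reduces to deciding which of these two sizes occurs, deciding whether a root exists at all, and exhibiting one explicit root in the degenerate case.

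First I would establish, by induction on $r$, that every root $x$ of $F$ satisfies
\[
x^{p^{n(r+1)}}=\alpha_r\,x+\beta_r,\qquad 0\le r\le m-1,
\]
with the recurrences $\alpha_r=A\,\alpha_{r-1}^{p^n}$ and $\beta_r=\alpha_{r-1}^{p^n}B+\beta_{r-1}^{p^n}$ starting from $\alpha_0=A$, $\beta_0=B$; solving these yields the closed forms for $\alpha_r,\beta_r$ in the statement. The key observation is that $nm=(n/d)\ell$ is a multiple of $\ell$, so $x^{p^{nm}}=x$ for every $x\in\F_{p^\ell}$. Substituting $r=m-1$ into the displayed identity then collapses the iterated relation to the single affine equation
\[
(1-\alpha_{m-1})\,x=\beta_{m-1}
\]
over $\F_{p^\ell}$, which is the engine driving all three cases.

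Next I would prove the normalizing identity $\alpha_{m-1}=A^{(p^\ell-1)/(p^d-1)}$. Since $A\in\F_{p^\ell}$, exponents of $A$ matter only modulo $p^\ell-1$, and the exponent $\sum_{j=0}^{m-1}p^{nj}$ reduces there to $\sum_{j=0}^{m-1}p^{dj}=(p^\ell-1)/(p^d-1)$: indeed $nj\equiv d\cdot((n/d)j)\pmod{\ell}$, and because $\gcd(n/d,m)=1$ the residues $(n/d)j \bmod m$ run over a permutation of $0,1,\dots,m-1$. This shows $\alpha_{m-1}=1$ exactly when $x^{p^n-1}=A$ is solvable in $\F_{p^\ell}$, i.e.\ exactly when $\ker L\cap\F_{p^\ell}$ has the full $p^d$ elements (otherwise the kernel is trivial). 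The case split is then immediate. If $\alpha_{m-1}\ne1$, then $L$ is injective, hence bijective, on $\F_{p^\ell}$, so $F$ has the unique root $x=\beta_{m-1}/(1-\alpha_{m-1})$, giving (ii). If $\alpha_{m-1}=1$, the affine equation reads $0=\beta_{m-1}$: when $\beta_{m-1}\ne0$ there is no root, giving (i); when $\beta_{m-1}=0$ a root exists and the full solution set is the coset $x+\{\delta\tau:\delta\in\F_{p^d}\}$, where $\tau^{p^n-1}=A$, giving (iii).

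The only remaining work, and the point I expect to be the main obstacle, is to verify that the explicit expression
\[
x=\frac{1}{\Tr_d(c)}\sum_{i=0}^{m-1}\Bigl(\sum_{j=0}^{i}c^{p^{nj}}\Bigr)A^{t_i}B^{p^{ni}}
\]
actually satisfies $x^{p^n}=Ax+B$ in case (iii). This is a direct but delicate computation: one applies the $p^n$-power map termwise, uses that $\Tr_d(c)\in\F_{p^d}$ is fixed by it, and reorganizes the resulting nested Frobenius sums so that they telescope, the boundary terms combining into $B$ precisely because $\alpha_{m-1}=1$ and $\beta_{m-1}=0$. Managing this bookkeeping—together with the permutation-of-exponents argument underlying the identity $\alpha_{m-1}=A^{(p^\ell-1)/(p^d-1)}$—is where the technical weight lies; the qualitative counting in the other two cases follows cleanly from the coset structure.
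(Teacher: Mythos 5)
This lemma is not proved in the paper at all: it is quoted, with minor notational changes, from \cite[Theorem 3]{CH04}, so there is no internal proof to compare you against; the natural benchmark is Coulter--Henderson's original argument, and your plan is essentially that argument. All of your structural claims are correct: the Frobenius iteration gives $x^{p^{n(r+1)}}=\alpha_r x+\beta_r$ with $\alpha_r=A\alpha_{r-1}^{p^n}$, $\beta_r=\alpha_{r-1}^{p^n}B+\beta_{r-1}^{p^n}$; since $nm=(n/d)\ell$ is a multiple of $\ell$, every root in $\F_{p^\ell}$ satisfies $(1-\alpha_{m-1})x=\beta_{m-1}$; the permutation-of-residues argument (valid because $\gcd(n/d,m)=1$) does give $\alpha_{m-1}=A^{(p^\ell-1)/(p^d-1)}$, hence $\alpha_{m-1}=1$ exactly when the kernel of $X^{p^n}-AX$ on $\F_{p^\ell}$ has $p^d$ elements; and cases (i), (ii) and the coset count in (iii) follow as you say.

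Two remarks on the step you defer. First, existence of a root in case (iii) does \emph{not} follow from the affine equation degenerating to $0=0$, as one of your sentences suggests; it rests entirely on verifying the explicit formula, so that verification is not optional bookkeeping but the actual existence proof. Second, the verification does go through, and more easily than you fear. Write $c_i=\sum_{j=0}^{i}c^{p^{nj}}$, so that $x=\frac{1}{\Tr_d(c)}\sum_{i=0}^{m-1}c_iA^{t_i}B^{p^{ni}}$ and $\beta_{m-1}=\sum_{i=0}^{m-1}A^{t_i}B^{p^{ni}}$. The two identities needed are $A^{p^nt_i}=A\cdot A^{t_{i+1}}$ for $0\le i\le m-2$ (because $p^nt_i=t_{i+1}+p^{nm}$ and $p^{nm}\equiv 1\pmod{p^\ell-1}$), and $c_{m-1}^{p^n}=c_{m-1}=\Tr_d(c)$ (the same permutation-of-residues fact applied to $c$). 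Raising $x$ to the power $p^n$, the $i=m-1$ term then contributes exactly $B$, the shift $i\mapsto i+1$ aligns the remaining terms with those of $Ax$, and since $c_i^{p^n}=c_{i+1}-c$ everything collapses to
\[
x^{p^n}-Ax-B=-\frac{cA}{\Tr_d(c)}\sum_{i=0}^{m-1}A^{t_i}B^{p^{ni}}=-\frac{cA}{\Tr_d(c)}\,\beta_{m-1}=0.
\]
Note that this computation uses only $\beta_{m-1}=0$; the hypothesis $\alpha_{m-1}=1$ enters in guaranteeing that $\tau$ exists in $\F_{p^\ell}$ and that the root count is $p^d$, not in the telescoping, contrary to your description of where it is needed. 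With this written out, your proposal is a complete and correct proof, coinciding with the one in \cite{CH04}. (A small side remark: in case (i) of the statement, ``no roots in $\F_{p^n}$'' should read ``no roots in $\F_{p^\ell}$''; this is a typo in the paper's transcription of the lemma.)
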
 
\section{The proof of the conjecture for \texorpdfstring{$k\geq4$}{}}

We consider first $k\geq 4$. Consider again the polynomial 
\[
f(X)= X^{(p-1)q+1}+\alpha X^{pq}+ X^{q+p-1} \in \mathbb{F}_{q^2}[X],
\]
where $\alpha \in \F_q^*$ and $q=p^k$, $k\geq 4$, $p$ an odd prime. It is well-known~\cite{MR1812309,MR2495253,AKBARY201151} that $f(X)=X^{q+p-1}(X^{(q-1)(p-2)}+\alpha X^{(q-1)(p-1)} +1)$ permutes $\mathbb{F}_{q^2}$, since $\gcd(q+p-1,q^2-1)=1$, if and only if 
\[
g_{\alpha}(X)=X^{q+p-1}(X^{p-2}+\alpha X^{p-1}+1)^{q-1}
\] 
permutes $\mu_{q+1}=\{a \in \mathbb{F}_{q^2} \ : \ a^{q+1}=1\}$. We can restrict our investigation to those $\alpha$ such that $\alpha+2\neq 0$, otherwise $g_{\alpha}(1)=0$, and so, it cannot be a permutation on $\mu_{q+1}$. For any $x\in \mu_{q+1}$,
\begin{eqnarray*}
    g_{\alpha}(x)&=&x^{p-2}\frac{(x^{p-2}+\alpha x^{p-1}+1)^{q}}{x^{p-2}+\alpha x^{p-1}+1}\\
    &=&x^{p-2}\frac{(1/x)^{p-2}+\alpha (1/x)^{p-1}+1}{x^{p-2}+\alpha x^{p-1}+1}\\
    &=&\frac{x+\alpha + x^{p-1}}{x^{p-1}+\alpha x^{p}+ x}.
\end{eqnarray*}

We shall need below the well-known fact that $\mu_{q+1}\setminus \{1\}=\{(t+i)/(t-i) : t \in \mathbb{F}_q, i^q=-i\}$. Consider 
\begin{eqnarray*}
    F_{\alpha}(X,Y)&:=&(X+\alpha + X^{p-1})(Y^{p-1}+\alpha Y^{p}+ Y)- (Y+\alpha + Y^{p-1})(X^{p-1}+\alpha X^{p}+ X)\\
    &=&\alpha( X^{p-1}Y^p-X^{p}Y^{p-1}+XY^{p}-X^{p}Y+\alpha (Y-X)^p+Y^{p-1}-X^{p-1}+Y-X).
\end{eqnarray*}
It is readily seen that $g_{\alpha}$ permutes $\mu_{q+1}$ if and only if there exist no pairs $(x,y) \in \mu_{q+1}^2$, $x\neq y$, such that  $F_{\alpha}(x,y)=0$. The polynomial $F^{(1)}_{\alpha}(X,Y):= F_{\alpha}(X,Y)/(X-Y)$ defines a curve $\mathcal{C}_{\alpha}$ in $\mathbb{A}^2(\mathbb{F}_{q^2})$ that is $\mathbb{F}_{q^2}$-birationally equivalent to the curve $\mathcal{D}_{\alpha}\subset \mathbb{A}^2(\mathbb{F}_{q})$ defined by 
\[
G_{\alpha}(X,Y):= \frac{(X-i)(Y-i)}{2i(Y-X)}F_{\alpha}\left(\frac{X+i}{X-i},\frac{Y+i}{Y-i}\right).
\]
Such a birationality does not preserve the $\mathbb{F}_{q}$-rationality of points nor of components of the two curves in general, but sends $(x,y) \in \mu_{q+1}^2$ in $\mathcal{C}_{\alpha}$ into $(\overline{x},\overline{y}) \in \mathbb{F}_{q}^2$ in $\mathcal{D}_{\alpha}$ and vice-versa and preserves the number of absolutely irreducible components of the two curves.
Thus, the curve $\mathcal{D}_{\alpha}$ is absolutely irreducible if and only if so is $\mathcal{C}_{\alpha}$.

We aim now at proving that the curve $\mathcal{C}_{\alpha}$ is absolutely irreducible. First we compute the set of singular points of $\mathcal{C}_{\alpha}$. There are two points at infinity, namely $P_{\infty}=(1:0:0)$ and $Q_{\infty}=(0:1:0)$ and tangent lines at these $(p-1)$-fold singular points are $Y=\eta$ with $\eta^{p-1}+\eta+\alpha=0$ and $X=\xi$ with $\xi^{p-1}+\xi+\alpha=0$. We are interested in bounding the multiplicity of intersection of two putative components passing through them. Consider the point $Q_{\infty}$. Let $F_{\alpha}(X,Y,T)$ be the homogenized companion polynomial of $F_{\alpha}(X,Y)$. Now
\begin{eqnarray*}
    F_{\alpha}(X,1,Y)&:=&\alpha( X^{p-1}-X^{p}+XY^{p-2}-X^{p}Y^{p-2}\\
    &&+\alpha (1-X^p)Y^{p-1}+Y^{p}-X^{p-1}Y^p+Y^{2p-2}-XY^{2p-2})
\end{eqnarray*}
and we need to investigate the point $(0:0:1)$ for the curve $F_{\alpha}(X,1,Y)=0$. The tangent lines are $X=\xi Y$ with $\xi^{p-1}+\xi+\alpha=0$. If they are all distinct then $(0:0:1)$ (and so $Q_{\infty}$) is an ordinary point and the multiplicity of intersection of two putative components through it is at most $(p-1)^2/4$ (see Lemma~\ref{le:ordinary_singular}). More precisely, this also shows that if the components have  $r_1$ and $r_2$ tangent lines through $(0:0:1)$ (respectively) then the multiplicity of intersection would be precisely $r_1r_2$.

Let $X=\xi Y$ be a repeated tangent at $(0:0:1)$. This implies $2\xi+\alpha=0$ and $\xi^{p-2}=1$. In particular $\xi\neq 1$ otherwise $\alpha=-2$, a contradiction. Since the homogeneous part of degree $p$ in $F_{\alpha}(X,1,Y)$ is $\alpha(-X^p+Y^p)$, the condition $\xi \neq 1$ yields $(X-\xi Y)\nmid (-X^p+Y^p)=(X-Y)^p$. If the two putative components through $(0:0:1)$ defined by 
\[
G(X,Y)= G_{s}(X,Y)+G_{s+1}(X,Y)\cdots
\]
and 
\[
H(X,Y)= H_{r}(X,Y)+G_{r+1}(X,Y)\cdots
\]
share the tangent line $X-\xi Y=0$ then $(X-\xi Y) \mid  G_{s}(X,Y)$ and $(X-\xi Y) \mid  H_{r}(X,Y)$ and thus 
\[
(X-\xi Y)\mid G_{s+1}(X,Y)H_{r}(X,Y)+G_{s}(X,Y)G_{r+1}(X,Y)=(X-Y)^p,
\]
a contradiction. Thus the repeated tangent lines (if any) at $(0:0:1)$ must be tangent lines of a unique (putative) component through it and thus the two components do not share any tangent line through $(0:0:1)$. As above, this shows that the multiplicity of the intersection of the two putative components through it is at most $(p-1)^2/4$, again by Lemma~\ref{le:ordinary_singular}. As before, this also shows that if the components have $r_1$ and $r_2$ tangent lines through $(0:0:1)$ (respectively), then the multiplicity of the intersection would be precisely $r_1r_2$.


Concerning the affine singular points of $\mathcal{C}_{\alpha}$, they are contained in the set of affine singular points of the curve $\frac{1}{\alpha}F_{\alpha}(X,Y)=0$ and thus they satisfy 
$$\begin{cases}
    \frac{1}{\alpha}\cdot \frac{\partial F_{\alpha}}{\partial X}= -X^{p-2}Y^p+Y^p+X^{p-2}-1=(Y^p-1)(1-X^{p-2})=0\\
    \frac{1}{\alpha}\cdot \frac{\partial  F_{\alpha}}{\partial Y}= Y^{p-2}X^p-X^p-Y^{p-2}+1=(X^p-1)(1-Y^{p-2})=0.\\
\end{cases}$$
In particular, no affine singular points belong to $XY=0$ (that can be seen easily: if $X=0$, for example, then $Y=1$, and so, $F_{\alpha}(0,1)=\alpha+2\neq 0$, a contradiction).
Also note that $F^{(1)}_{\alpha}(X,Y)$ satisfies 
$$F^{(1)}_{\alpha}(X,X)=\alpha(X^{2p-2}-X^{p}-X^{p-2}+1)=\alpha(X-1)^{p}(X^{p-2}-1)$$
and thus the only possibly singular point on the line $X=Y$ is $(1:1:1)$. 

We consider a number of cases.

\begin{enumerate}
\item $X=1$. Since 
$$\frac{1}{\alpha}F_{\alpha}(1,Y)=Y^p-Y^{p-1}+Y^p-Y+\alpha(Y^p-1)+Y^{p-1}-1+Y-1=(\alpha+2)(Y^p-1),$$
then $Y=1$, since $\alpha\neq -2$. 
\item $Y=1$. As above, this yields $X=1$, since
 $\alpha\neq -2$.
\item $X\neq1\neq Y$ and $X^{p-2}=1=Y^{p-2}$. This yields 
\begin{eqnarray*}
    \frac{1}{\alpha}F_{\alpha}(X,Y)&=& XY^2-X^{2}Y+XY^{2}-X^{2}Y+\alpha (Y^2-X^2)+Y-X+Y-X\\
    &=&(X-Y)(2XY+\alpha (X+Y)+2)=0.
\end{eqnarray*}
By the considerations above, we can assume that $X\neq Y$ and thus we are left with the condition 
$2XY+\alpha (X+Y)+2=0$. Raising it to the power $p$ and using again $X^{p-2}=1=Y^{p-2}$ one gets
$2X^2Y^2+\alpha^p (X^2+Y^2)+2=0$. Combining the two conditions it is readily seen that there are at most $4$ singular points of this type, since  $\alpha\neq -2$.
Let $(u,w)$, $u,w \notin \{0,1\}$, $u\neq w$, be a singular point. Then  
\begin{eqnarray*}
    \frac{1}{\alpha}F_{\alpha}(X+u,Y+w)&:=&u^{p-3}(w-1)^pX^2-(u-1)^pw^{p-3}Y^2+\cdots. 
\end{eqnarray*}
Thus $(u,w)$ is an ordinary double point and the intersection multiplicity of the two putative components of $\mathcal{C}_{\alpha}$  is at most $1$.
\end{enumerate}

To conclude the investigation of the singular points of $\mathcal{C}_{\alpha}$, we need to consider $(1:1:1)$. First, we compute 
\begin{eqnarray*}
    \frac{1}{\alpha}F_{\alpha}(X+1,Y+1)&=&(X+1)^{p-1}(Y^p+1)-(Y+1)^{p-1}(X^p+1)\\
    &&+XY^p+Y^p+X-YX^p-X^p-Y+\alpha(Y-X)^p\\
    &&+(Y+1)^{p-1}-(X+1)^{p-1}+Y-X\\
    &=&(X+1)^{p-1}Y^p-(Y+1)^{p-1}X^p\\
    &&+XY^p+Y^p-YX^p-X^p+\alpha(Y-X)^p\\
    &=&(\alpha+2)(Y^p-X^p)+Y^p(X^2-X^3+\cdots -X^{p-2}+X^{p-1})\\
    &&-X^p(Y^2-Y^3+\cdots -Y^{p-2}+Y^{p-1}).
\end{eqnarray*}
Thus, $\frac{1}{\alpha(Y-X)}F_{\alpha}(X+1,Y+1)$ can be written in degree increasing homogeneous components as
$$(\alpha+2)(Y-X)^{p-1} +X^2Y^2\frac{Y^{p-2}-X^{p-2}}{Y-X}+\cdots .$$
This shows that $(1:1:1)$ is a non-ordinary $(p-1)$-fold singular point. Also, two putative factors of the above polynomial  must have the following shape
$$L(X,Y):= \ell \cdot (Y-X)^{r}+L_{r+1}(X,Y)+L_{r+2}(X,Y)+\cdots$$
and 
$$M(X,Y):=m \cdot (Y-X)^{p-1-r}+M_{p-r}(X,Y)+M_{p-r+1}(X,Y)+\cdots,$$
where $L_i$ and $M_i$ are homogeneous polynomials of degree $i$ or the zero polynomial, $0<r<p-1$, $\ell m=\alpha+2\neq 0$.

These two factors correspond to two components $\mathcal{L}:L(X-1,Y-1)=0$ and $\mathcal{M}:M(X-1,Y-1)=0$ of $\mathcal{C}_{\alpha}$ passing both through $(1:1:1)$. By the properties of the intersection multiplicities, 
$$ I((0:0:1),(L(X,Y)=0)\ \cap\  (M(X,Y)=0))=I((1:1:1),\mathcal{L}\cap \mathcal{M}).$$

Since $L(X,Y)M(X,Y)=\frac{1}{\alpha(Y-X)}F_{\alpha}(X+1,Y+1)$, 

$$mL_{r+1}(X,Y) (Y-X)^{p-1-r}+\ell (Y-X)^{r}M_{p-r}(X,Y)=0$$
and 
$$\ell \cdot (Y-X)^{r}M_{p-r+1}+ L_{r+1}(X,Y)M_{p-r}(X,Y)+m \cdot (Y-X)^{p-1-r}L_{r+2}(X,Y)=X^2Y^2\frac{Y^{p-2}-X^{p-2}}{Y-X}.$$

If $r\neq (p-1)/2$ then from the first condition either \begin{align*}
(Y-X)\mid M_{p-r}(X,Y) \text{ or } (Y-X)\mid L_{r+1}(X,Y)  
\end{align*}
and thus from the second equality 
$$(Y-X)\mid X^2Y^2\frac{Y^{p-2}-X^{p-2}}{Y-X},$$
a contradiction. 
 
 Thus, if two components of $\mathcal{C}_{\alpha}$ pass through $(1:1:1)$ then such a point is a $(p-1)/2$-fold singular point for each of them and $M_{(p+1)/2}=-mL_{(p+1)/2}(X,Y)/\ell $. Also, arguing as above, the same contradiction arises if $(Y-X) \mid M_{(p+1)/2}(X,Y)L_{(p+1)/2}(X,Y)$. Thus, in the following, we assume that $(Y-X) \nmid M_{(p+1)/2}(X,Y)L_{(p+1)/2}(X,Y)$. Note that since $(1:1:1)$ is a $(p-1)/2$-fold singular point for each of the components, the degree of each of them is at least $(p-1)/2+1$.

This also tells us that the multiplicity of intersection of $\mathcal{L}$ and $\mathcal{M}$ at $(1:1:1)$ is precisely $(p^2-1)/4$. 
 In fact, by another property of the intersection multiplicities, such a number equals 
 $$I((0:0:1),(L(X,Y)=0)\ \cap\  (M(X,Y)=0))=I((0:0:1),(L(X,Y)=0)\ \cap\  (M^{\prime}(X,Y)=0)),$$
 where  $M^{\prime}(X,Y):= M(X,Y)-m L(X,Y)/\ell$.
 Now, 
\begin{eqnarray*}
M^{\prime}(X,Y)&:=&M(X,Y)-m L(X,Y)/\ell\\
&=&(M_{(p+1)/2}(X,Y)-m L_{(p+1)/2}(X,Y)/\ell)\\
&&\qquad\qquad \qquad  +(M_{(p+3)/2}(X,Y)-m L_{(p+3)/2}(X,Y)/\ell)+\cdots \\
&=&-2m L_{(p+1)/2}(X,Y)/\ell+(M_{(p+3)/2}(X,Y)-m L_{(p+3)/2}(X,Y)/\ell)+\cdots,
\end{eqnarray*}
and $$I((0:0:1),(L(X,Y)=0)\ \cap\  (M^{\prime}(X,Y)=0))=(p^2-1)/4$$
since $L(X,Y)=0$ and $M^{\prime}(X,Y)=0$ do not share any tangent line at the origin as $$GCD(L_{(p+1)/2}(X,Y), (Y-X)^{(p-1)/2})=1.$$

We are now in position to prove that $\mathcal{C}_{\alpha}$ is absolutely irreducible.

By way of contradiction, let 
\begin{align*}
\mathcal{C}^{(1)}_{\alpha}&: X^{r_1}Y^{r_2}+\cdots =0,\\
\mathcal{C}^{(2)}_{\alpha}&: X^{p-1-r_1}Y^{p-1-r_2}+\cdots =0 
\end{align*}
be two (not necessarily irreducible) components of $\mathcal{C}_{\alpha}$. They do not share any component, since otherwise the number of singular points of $\mathcal{C}_{\alpha}$ would be infinite, a contradiction.

They intersect, by B\'ezout Theorem, in precisely 
$$(r_1+r_2)(p-1-r_1+p-1-r_2)$$
points counted with multiplicity. In addition, $\mathcal{C}^{(1)}_{\alpha}$ and $\mathcal{C}^{(2)}_{\alpha}$ must intersect at singular points of $\mathcal{C}_{\alpha}$. We already proved that the only singular points of $\mathcal{C}_{\alpha}$ are $(1:0:0)$, $(0:1:0)$, and $(1:1:1)$ together with at most four other affine ordinary double points. We also showed that the multiplicity of the intersection of $\mathcal{C}^{(1)}_{\alpha}$ and $\mathcal{C}^{(2)}_{\alpha}$ at $(1:0:0)$, $(0:1:0)$ is in total 
$$r_1(p-1-r_1)+r_2(p-1-r_2).$$
If at least one between $\mathcal{C}^{(1)}_{\alpha}$ and $\mathcal{C}^{(2)}_{\alpha}$ does not pass through $(1:1:1)$ then the sum of their intersection multiplicities is
$$r_1(p-1-r_1)+r_2(p-1-r_2)+4<(r_1+r_2)(p-1-r_1+p-1-r_2),$$
a contradiction. 

Thus, $\mathcal{C}^{(1)}_{\alpha}$ and $\mathcal{C}^{(2)}_{\alpha}$ intersect at $(1:1:1)$. As we have already observed, since the only possibility is that $(1:1:1)$ is a $(p-1)/2$-fold singular point for each of the components, their degree is at least $(p-1)/2+1$. Thus, $r_1+r_2>(p-1)/2$ and $p-1-r_1+p-1-r_2>(p-1)/2$, yielding $(p-1)/2<r_1+r_2<3(p-1)/2$. 

Since the curve $\mathcal{C}_{\alpha}$ is fixed by $(X,Y)\mapsto (Y,X)$ either it has only two absolutely irreducible components (and they can be either switched or fixed by $(X,Y)\mapsto (Y,X)$) or it possesses more than two absolutely irreducible components and they can be rearranged into two larger ones fixed by $(X,Y)\mapsto (Y,X)$. 

That is to say, we can assume that either $r_1=r_2$ or $r_1=p-1-r_2$.  

If $r_1=r_2$, from $(p-1)/2<r_1+r_2<3(p-1)/2$, we have 
$(p-1)/4<r_1<3(p-1)/4$ and thus, from $p>7$,  $(p^2-1)/4+4<2r_1(p-1-r_1)$. This yields
    \begin{eqnarray*}
        r_1(p-1-r_1)+r_1(p-1-r_1)+(p^2-1)/4+4&<&2r_1(p-1-r_1) +2r_1(p-1-r_1)\\
        &=&4r_1(p-1-r_1)=\deg(\mathcal{C}^{(1)}_{\alpha})\deg(\mathcal{C}^{(2)}_{\alpha}),
    \end{eqnarray*}
a contradiction  to B\'ezout Theorem.
    
Suppose now  $r_1=p-1-r_2$. Since $r_1(p-1-r_1)\leq (p-1)^2/4$, and, from $p>7$,  $(p^2-1)/4+4<(p-1)^2/2$,
    \begin{eqnarray*}
        r_1(p-1-r_1)+r_1(p-1-r_1)+(p^2-1)/4+4&\leq &(p-1)^2/2+(p^2-1)/4+4\\
        &< &(p-1)^2=\deg(\mathcal{C}^{(1)}_{\alpha})\deg(\mathcal{C}^{(2)}_{\alpha}), 
    \end{eqnarray*}
 contradicting B\'ezout Theorem.   

The argument above shows that the curve $\mathcal{C}_{\alpha}$ is absolutely irreducible. We put together the above discussion.

\begin{thm}
Let $\alpha\in \mathbb{F}_q^*$ and $q=p^k$, $k\geq 4$, $p>7$ prime. Then the trinomial \[
f(X)=X^{q(p-1)+1}+ \alpha X^{pq}+X^{q+p-1} 
\]
is not a permutation polynomial over $\F_{q^2}$. 
\end{thm}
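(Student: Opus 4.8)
Having established that $\mathcal{C}_\alpha$, and hence $\mathcal{D}_\alpha$, is absolutely irreducible, the plan is to turn this into an \emph{existence} statement: I will show that for $k\ge 4$ the curve $\mathcal{D}_\alpha$ carries an affine $\mathbb{F}_q$-rational point off the diagonal $X=Y$. The case $\alpha=-2$ is disposed of at once, since then $g_\alpha(1)=0$ and $g_\alpha$ cannot permute $\mu_{q+1}$; so assume $\alpha\neq -2$. By the equivalence already recorded, $g_\alpha$ fails to permute $\mu_{q+1}$ exactly when there is a pair $(x,y)\in\mu_{q+1}^2$ with $x\neq y$ and $F_\alpha(x,y)=0$. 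Under the $\mathbb{F}_q$-birational correspondence (which sends the diagonal to the diagonal, since the same substitution $X\mapsto (X+i)/(X-i)$ is applied in both coordinates), such a pair corresponds precisely to an affine, off-diagonal $\mathbb{F}_q$-point of $\mathcal{D}_\alpha$. Thus it suffices to exhibit one such point: its existence shows $g_\alpha$ is not injective on $\mu_{q+1}$, whence $f$ is not a permutation of $\mathbb{F}_{q^2}$.

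To force such a point I would invoke a Hasse--Weil estimate, for which I need a good upper bound on the genus of $\mathcal{D}_\alpha$; here the crude degree form of the Aubry--Perret bound (which, with $d=2p-2$, contributes the factor $(2p-3)(2p-4)\sqrt q$) is too weak to reach $k=4$, and the genus reduction coming from the high-multiplicity singularities is essential. The projective curve has degree $d=2p-2$ and arithmetic genus $\binom{2p-3}{2}$. By the singularity analysis already carried out, the only singular points are the two $(p-1)$-fold points $P_\infty,Q_\infty$ at infinity, the $(p-1)$-fold point $(1:1:1)$, and at most four affine ordinary double points. Since every $m$-fold point lowers the geometric genus by at least $\binom{m}{2}$ (irrespective of ordinariness, the non-ordinary point $(1:1:1)$ only lowering it further), the geometric genus $g$ of $\mathcal{D}_\alpha$ satisfies
\[
g\ \le\ \binom{2p-3}{2}-3\binom{p-1}{2}-4\ =\ \frac{(p-2)(p-3)}{2}-4.
\]

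Finally I would apply Hasse--Weil to the (absolutely irreducible) smooth model of $\mathcal{D}_\alpha$, which then carries at least $q+1-(p-2)(p-3)\sqrt q$ rational points (using $2g\le (p-2)(p-3)$), and subtract the points that do \emph{not} yield admissible pairs: the points at infinity (at most $2p-2$); the diagonal points, governed by $F^{(1)}_\alpha(X,X)=\alpha(X-1)^p(X^{p-2}-1)$ and hence at most $p-1$ in number; and an $O(p)$ correction for the branches lying over the finitely many singular and infinite points. For $q=p^k$ with $k\ge 4$ the main term dominates all of these: already at $k=4$,
\[
q+1-(p-2)(p-3)\sqrt q\ =\ p^4+1-(p^2-5p+6)p^2\ =\ 5p^3-6p^2+1,
\]
which for $p>7$ vastly exceeds the $O(p)$ of excluded points, and the surplus only grows for $k>4$. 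Hence an admissible off-diagonal pair exists and $f$ is not a permutation. I expect the genus estimate to be the main obstacle---in particular, controlling the delta-invariant of the non-ordinary $(p-1)$-fold point $(1:1:1)$ well enough to trust the bound, and keeping an honest count of the $O(p)$ discrepancy between the smooth-model point count and the number of genuine admissible pairs $(x,y)\in\mu_{q+1}^2$.
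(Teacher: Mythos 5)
Your argument is correct, but it reaches the point-count by a genuinely different route than the paper, and your stated reason for avoiding the paper's route rests on a miscalculation. The paper applies the Aubry--Perret bound not to the degree-$(2p-2)$ curve $\mathcal{C}_\alpha$ but to $\mathcal{D}_\alpha$, which has degree $p-1$: the coordinatewise M\"obius substitution is an automorphism of $\mathbb{P}^1\times\mathbb{P}^1$ preserving the bidegree $(p-1,p-1)$ of $\mathcal{C}_\alpha$, and it moves the $(p-1)$-fold point $(1:1:1)$ to $(\infty,\infty)$, precisely the point blown up in passing to the plane model, so that $\deg\mathcal{D}_\alpha=2(p-1)-(p-1)=p-1$; hence Aubry--Perret already yields the error term $(p-2)(p-3)\sqrt{q}$ with no genus computation, and your premise that the degree form of the bound ``is too weak to reach $k=4$'' applies only to the wrong model. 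Your alternative keeps the degree-$(2p-2)$ model and extracts the same error term from Hasse--Weil via the genus bound $g\le \binom{2p-3}{2}-\sum_P\binom{m_P}{2}\le\frac{(p-2)(p-3)}{2}$, which is legitimate: the inequality $\delta_P\ge\binom{m_P}{2}$ requires no ordinariness, so the non-ordinary point $(1:1:1)$ causes no trouble (your $-4$ term should be dropped unless the four nodes actually occur, but this is harmless since you only use $2g\le(p-2)(p-3)$). That the two error terms coincide is no accident, since the geometric genus is a birational invariant and the degree-$(p-1)$ plane model has arithmetic genus $\frac{(p-2)(p-3)}{2}$. What the paper's route buys is brevity: Aubry--Perret is stated for singular plane curves, so no smooth-model bookkeeping is needed, though the paper leaves the degree claim $\deg\mathcal{D}_\alpha=p-1$ unjustified. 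What your route buys is independence from that degree computation: everything is read off the singularity analysis already performed for irreducibility, at the cost of the branch-counting corrections over infinity, the diagonal and the singular locus, which, as you note, are $O(p)$ (even a crude $O(p^2)$ bound would do) and are swamped by the main term $5p^3-6p^2+1$ at $k=4$.
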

\begin{proof}
As already observed if $\alpha=-2$ then $g_{\alpha}(1)=0$ and thus $g_{\alpha}$ does not permute $\mu_{q+1}$. 

Suppose now that $\alpha\neq -2$. The curve $\mathcal{C}_{\alpha}$ is absolutely irreducible and so is $\mathcal{D}_{\alpha}$. Since $\mathcal{D}_{\alpha}$ is defined over $\mathbb{F}_q$ and of degree $p-1$, Theorem~\ref{Th:AubryPerret} tells us that it possesses at least 
$$ p^{k}+1-(p-2)(p-3)p^{k/2}$$
$\mathbb{F}_q$-rational points in $\mathbb{P}^2(\mathbb{F}_q)$ and at most $2(p-1)$ of them belong to the line at infinity or to $X-Y=0$. Since $k\geq 4$, $$p^{k}+1-(p-2)(p-3)p^{k/2}-2(p-1)>0.$$
Thus there exists a pair $(\overline{x},\overline{y})\in \mathbb{F}_q^2$, $\overline{x}\neq \overline{y}$, such that  $g_{\alpha}((\overline{x}+i)/(\overline{x}-i)=g_{\alpha}((\overline{y}+i)/(\overline{y}-i))$ and therefore $g_{\alpha}$ does not permute $\mu_{q+1}$. This shows that  $f(X)$ is not a permutation over $\mathbb{F}_{q^2}$.
\end{proof}

\section{Proof of the conjecture for \texorpdfstring{$k=3$}{}}

The cases $k=2,3$ require separate approaches by different algebraic number theory methods. 
Notice that $f$ can be written as
\[
f(X) = \alpha X^{pq} + \Tr(X^{q+p-1}) = (\alpha X^p + \Tr(X^{q+p-1}))^q,
\]
where $\Tr$ is the relative trace map from $\F_{q^2}$ to $\F_q$ given by $\Tr(X)=X^q+X$, $\alpha\in\F_{p^3}\setminus\F_p$ (though our proof here covers this case, as well, it is straightforward to show that if $\alpha\in\F_p$, the conjecture is true). Note that $(\alpha X^p + \Tr(X^{q+p-1}))^q$ is a permutation polynomial if and only if $\alpha X^p + \Tr(X^{q+p-1})$ is a permutation polynomial. Therefore,  in what follows, we shall consider the permutation property of the polynomial
\[
f(X)= \alpha X^p + \Tr(X^{q+p-1}),\ \alpha\in \F_q^*.
\]
Recall that $f$ is a permutation on $\F_{q^2}$ if and only if for any $g \in \F_{q^2}$, equation $f(X)=g$ has exactly one solution in $\F_{q^2}$.

Thus, for $k=3$, we consider the equation
\begin{equation}
\label{eq1}
\alpha X^p+\Tr(X^{p^3+p-1})=\alpha X^p+X^{p^3+p-1}+X^{p^4-p^3+1}=g.
\end{equation}
Raising to the $p^3$ power, we get (we use below the fact that $\alpha\in\F_{p^3},X\in\F_{p^6}$, so, $\alpha^{p^3}=\alpha,X^{p^6}=X$)
\begin{equation}
\label{eq2}
\alpha X^{p^4}+X^{p^3+p-1}+X^{p^4-p^3+1}=g^{p^3}.
\end{equation}
 Equations~\eqref{eq1} and~\eqref{eq2} imply
\[
\alpha(X^{p^4}-X^p)+g-g^{p^3}=0.
\]
We use the transformation $g\mapsto h^p,\alpha\mapsto \beta^p$, obtaining
\[
X^{p^3}-X-B=0,
\]
where
$\displaystyle B=\frac{h^{p^3}-h}{\beta}$ (we use the usual convention that $x^{-1}=x^{Q-2}$ in $\F_Q$, for some $p$-power~$Q$).

 We apply Lemma~\ref{L1} to $X^{p^3}-X-B=0$, and so $n=3$, $\ell=6$, $d=\gcd(n,\ell)=3$, $A=1$, $m=2$.  Thus, $\alpha_{m-1}=\alpha_1=1$, and therefore, either the equation has no roots or it has $p^3$ roots, which  are  of the form $x+\delta\tau$, where $\delta\in\F_{p^3}$, $\tau$ is a $(p^3-1)$-root of~$1$ (thus, we can merge them into $\lambda\in\F_{p^3}$), and for any (fixed) $c\in \F_{q^2}=\F_{p^6}$ with $\Tr(c)\neq 0$, then (using $B^{p^3}=-B$)
 \begin{align*}
 \displaystyle x&=\frac{1}{\Tr (c)} \sum_{i=0}^{1} \left(\sum_{j=0}^{i} c^{p^{3j}}\right)  B^{p^{3i}}=\frac{1}{\Tr (c)} \left(cB+\left(c+c^{p^3}\right)B^{p^3}\right)=\frac{-c^{p^3} B }{\Tr (c)}.
 \end{align*}
 We can safely take $c=1$, since $\Tr(1)=2\neq 0$, so $x=-B/2$.
 We take such a solution $X=-B/2+\lambda$, $\lambda\in\F_{p^3}$ and plug it into~\eqref{eq1}  and attempt to find two values of $\lambda$ for some fixed $h$ satisfying~\eqref{eq1}, or no value of $\lambda$ for such an $h$. 
 We therefore obtain
 \begin{align*}
& \beta^p \left(-\frac{B^p}{2}+ \lambda^p \right)
+\frac{\left(-\frac{B^{p^3}}{2}+ \lambda^{p^3}\right) \left(-\frac{B^p}{2}+ \lambda^p \right)}{\left(-\frac{B}{2}+ \lambda\right)}
+\frac{\left(-\frac{B^{p^4}}{2}+  \lambda^{p^4}\right)\left(-\frac{B }{2}+ \lambda \right)}{\left(-\frac{B^{p^3}}{2}+ \lambda^{p^3}\right)}=h^p.
 \end{align*}
 Surely, $\lambda=\frac{B}{2}$ (hence $X=0$) is a solution only for $h=0$.
 Using $B^{p^3}=-B, \lambda^{p^3}=\lambda$, we get
 \begin{equation*}
 \label{eq:lambda}
-B^2(B \beta +2    h)^p+8 \lambda B^{p+1}+4 \lambda^2 \left(B \beta +2 h\right)^p+2\lambda^p  B^2 \left(\beta -2\right)^p-8\lambda^{p+2} \left(2+ \beta\right)^p=0,
 \end{equation*}
 that is,
\begin{equation}
 \label{eq:lambda2}
\lambda^{p+2}-\frac{  B^2 \left(\beta -2\right)^p}{4\left(2+ \beta\right)^p}\lambda^p-\frac{\left(B \beta +2 h\right)^p}{2\left(2+ \beta\right)^p}\lambda^2-\frac{B^{p+1}}{\left(2+ \beta\right)^p}\lambda+\frac{B^2(B \beta +2    h)^p}{8\left(2+ \beta\right)^p}=0.
 \end{equation}
We divide by $B^{p+2}$ and using the substitution $\gamma=\frac{\lambda}{B}$, we obtain
\begin{equation}
 \label{eq:gamma}
 \gamma^{p+2}-\frac{(\beta-2)^p}{4(\beta+2)^p} \gamma^p-\frac{(\beta+\frac{2h}{B})^p}{4(\beta+2)^p} \gamma^2-\frac{1}{(\beta+2)^p} \gamma+\frac{(\beta+
 \frac{2h}{B})^p}{(\beta+2)^p}=0.
 \end{equation}
 
 We now either need to show that for some $h$, we have no solutions, or find some $h$ outside~$\F_{p^3}$ for which we have at least two values of $\gamma$  satisfying $\gamma^{p^3}=-\gamma$.
Replacing $B=\frac{h^{p^3}-h}{\beta}$, above, we get
(reverting back to $\alpha=\beta^p$ and using the notation $t:=(h^{p^4}+h^p)/(h^{p^4}-h^p)$)
\[
\gamma^{p+2}-\frac{1}{4}\gamma^p+\frac{1}{(\alpha+2)}\gamma^p -\frac{\alpha t}{4(\alpha+2)} \gamma^2-\frac{1}{(\alpha+2)} \gamma+\frac{\alpha t}{(\alpha+2)} =0,
\]
or
\begin{equation}
\label{eq:fin3}
\gamma^{p+2}- \frac{1-4\mu}{4}  \gamma^p -  \frac{(1-2\mu) t}{4}\, \gamma^2-\mu\, \gamma+ (1-2\mu) t =0,
\end{equation}
where $\mu=\dfrac{1}{\alpha+2}$ (so, $\mu\alpha=1-2\mu$).
We just need to find some $h\notin\F_{p^3}$ such that the previous equation has no solutions or it has more than one. 

One wonders if the simplest approach would work, that is, find some $h$ such that $t=0$. Surely,
if $h^{p^3}=-h$, then $t=0$, and Equation~\eqref{eq:fin3} becomes
\begin{equation}
    \label{eq:fin3_1}
\gamma^{p+1}+\frac{1-4\mu}{4\mu} \gamma^p-\mu\gamma=0.
\end{equation}
Surely, $\gamma=0$ is a root. Now, assuming $\gamma\neq 0$, we remove one copy of $\gamma$ and divide the above equation of $\gamma^{p+1}$. Relabeling $\frac{1}{\gamma}\mapsto \gamma$, we obtain 
\[
\gamma^{p+1}+\frac{1-4\mu}{4\mu}\gamma^2-\frac1\mu=0.
\]
We now let $\zeta\in\F_{p^3}$ such that $\zeta^{p^2+p+1}=-1$ (there are many such examples, like $\zeta=-1$, or $\zeta=6g^5+2g^4+2g^3+10g^2+5g+5$, where $g$ is a primitive root in $\F_{p^6}$, $p=11$, defined via the primitive polynomial $x^6+3x^4+4x^3+6x^2+7x+2$). We look for $\gamma$ such that $\gamma^p=\zeta \gamma$ (and so, our choice of $\zeta$ renders $\gamma^{p^3}=-\gamma$, which is needed).

The above displayed equation now becomes
\[
\gamma^2\left(\zeta+\frac{1-4\mu}{4\mu}\right)=\frac1\mu,
\]
that is $\gamma^2=\frac4{4(\zeta-1)\mu+1}$. Assuming that $4(\zeta-1)\mu+1$ is not a perfect square in $\F_{p^3}$, then there are values of $\gamma\neq 0$ satisfying~\eqref{eq:fin3_1}.
In what follows, we use $\eta(a)$ to denote the quadratic character of~$a$, that is, $\eta(a)=0$, if $0$, ands $\eta(a)=1$, if $a\neq 0$ is a square, respectively, $\eta(a)=-1$, if $a$ is not a square in $\F_{q}$.

We will argue now that given $\mu=\frac1{\alpha+2}\in\F_{p^3}\setminus\F_p$, we can always find $\zeta$ with $\eta(\zeta)=-1$ such that $\eta(4(\zeta-1)\mu+1)=-1$. Thus, we need to show that the below sum is nonzero, namely,
\begin{align*}
&\frac14 \sum_{\zeta\in\F_q} \Big (1-\eta(\zeta) \Big) {\Big(}1-\eta(4(\zeta-1)\mu+1)\Big)=\\
&\quad \frac14\left(q-\sum_{\zeta\in\F_q}\eta\Big(4(\zeta-1)\mu+1\Big)+\sum_{\zeta\in\F_q}\eta\Big({(4\left(\zeta-1)\mu+1\right)\zeta}\Big) \right)= \\
&\quad\frac14\left(q+\sum_{\zeta\in\F_q}\eta\Big({\left(4(\zeta-1)\mu+1\right)\zeta}\Big) \right),
\end{align*}
where we used the fact that $\zeta\mapsto 4(\zeta-1)\mu+1$ is linear and consequently, in the middle two sums, half the terms are $+1$ and half are $-1$. For the last expression above to vanish, we need $\eta\Big({(4(\zeta-1)\mu+1)\zeta}\Big)=-1$, regardless of $\zeta$. We will now use Weil's Theorem (see~\cite[Theorem 5.41]{LN97}), which states that for a character $\chi$ of order $s$ of $\F_q$ and a polynomial $f$ of degree $d$ over $\F_q$, which is not of the form $c(h(x))^\ell$, $c\in\F_q$, $h\in\F_q[x], \ell>1$, we have that $\left|\sum_{x\in\F_q} \chi(f(x))\right|\leq (d-1)\sqrt{q}$. Observing that ${(4(Z-1)\mu+1)Z}$ is not the square of a linear polynomial in $Z$, and taking $\chi=\eta$, the quadratic character (of order $2$), the  Weil's bound renders $\left|\sum_{x\in\F_q} \eta\Big({(4(\zeta-1)\mu+1)\zeta}\Big)\right|\leq \sqrt{q}$, and the above displayed equation cannot vanish.

What we thus showed is that regardless of $\mu=\frac1{\alpha+2}\in\F_q^*$, we can find some $\zeta\in\F_q$ with $\eta(\zeta)=-1$ such that $\gamma=\pm \sqrt{\frac4{4(\zeta-1)\mu+1}}$ are roots of Equation~\eqref{eq:fin3_1}, assuming that $\gamma^p=\zeta \gamma$ (so, $\gamma^{2p}=\zeta^2\gamma^2$).

We now look more closely at the condition $\gamma^{2p}=\zeta^2\gamma^2$. For a fixed $\zeta$ with $\zeta^{p^2+p+1}=-1$ (hence $\zeta\in\F_q$), the previous condition renders the equation in $\mu$,
\begin{equation}
\label{eq:fin3_2}
\mu^p-\frac{\zeta-1}{\zeta^2(\zeta^p-1)}\mu+\frac{\zeta^2-1}{4\zeta^2(\zeta^p-1)}=0.  
\end{equation}
We apply again Lemma~\ref{L1} with $n=1, d=1, m=3$, $A=\dfrac{\zeta-1}{\zeta^2(\zeta^p-1)}$, $B=-\dfrac{\zeta^2-1}{4\zeta^2(\zeta^p-1)}$. Thus,
\begin{align*}
\alpha_2&=A^{p^2+p+1}=\zeta^{-2-2p-2p^2}=1,\\
\beta_2&=\sum_{i=0}^2 s^{s_i}b^{p^i}=a^{s_0}b+a^{s_1} b^p+a^{s_2} b^{p^2}=\frac{\zeta^{-2p^2-2p-2}(\zeta^{2p^2+2p+2}-1)}{4(\zeta-1)}=0,
\end{align*}
since $s_0=p^2+p,s_1=p^2,s_2-0$. Consequently, Equation~\eqref{eq:fin3_2} has exactly $p$ roots, regardless of the value of $\zeta$ satisfying $\zeta^{p^2+p+1}=-1$.

We next get a lower bound for the number of $\mu$ satisfying all these conditions.
Now, if Equation~\eqref{eq:fin3_2} holds for two different $\zeta_1,\zeta_2$ (with the known conditions $\zeta_i^{p^2+p+1}=-1$, $i=1,2$), by subtracting the corresponding Equations~\eqref{eq:fin3_2}, we obtain a linear equation in $\mu$, and hence there exists at most one such $\mu$ that may satisfy~\eqref{eq:fin3_2} for two different $\zeta$'s. Therefore, the set of all $\mu$ satisfying the imposed conditions has cardinality at least $p+(p-1)+\cdots+2+1=\dfrac{p^2+p}{2}$. This is surely a rough estimate, and a computation reveals that. For example, if $p=11$, the set of $\mu$ satisfying all the imposed conditions is of cardinality $522$, compared with the cardinality of $\F_q^*$, that is, $1330$.

We collect the prior major observations in the next theorem.
\begin{thm}
Let $q=p^3$, where $p$ is prime. For any $-2\neq \alpha\in\F_q$, and $\zeta\in\F_q$ with $\zeta^{p^2+p+1}=-1$ such that $\eta(z)=-1$ ($\eta$ is the quadratic character on $\F_q$), and $z:=1+\frac{4(\zeta-1)}{\alpha+2}$ satisfying $\zeta^p=\zeta^2 \zeta$,   the trinomial 
\[
f(X)=X^{q(p-1)+1}+\alpha X^{pq}+X^{q+p-1}
\]
is not a permutation polynomial over $\F_{q^2}$.
\end{thm}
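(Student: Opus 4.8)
The plan is to prove that $f$ is not a permutation by exhibiting a single value with at least two preimages. Everything rests on the reductions already established: $f$ permutes $\F_{q^2}$ if and only if $\tilde{f}(X)=\alpha X^{p}+\Tr(X^{q+p-1})$ does, and for a target value $g=h^{p}$ the preimages correspond, via Lemma~\ref{L1} applied to the companion equation $X^{p^{3}}-X-B=0$ with $B=(h^{p^{3}}-h)/\beta$ and $\beta^{p}=\alpha$, to the \emph{admissible} roots $\gamma$ of~\eqref{eq:fin3}, where admissible means $\gamma^{p^{3}}=-\gamma$ (equivalently $\lambda\in\F_{p^{3}}$ under $\gamma=\lambda/B$). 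Since $B^{p^{3}}=-B$, the quantity $\beta_{m-1}$ in Lemma~\ref{L1} vanishes, so part (iii) guarantees the companion equation has the full set of $p^{3}$ solutions; hence counting preimages of $g$ is exactly counting admissible roots of~\eqref{eq:fin3}. It therefore suffices to produce one $h$ for which~\eqref{eq:fin3} has two admissible roots.

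First I would choose $h\neq 0$ with $h^{p^{3}}=-h$; such $h$ exists because the $(-1)$-eigenspace of $X\mapsto X^{p^{3}}$ on $\F_{p^{6}}$ is a $3$-dimensional $\F_{p}$-space. This makes $B=-2h/\beta\neq 0$ and $t=0$, collapsing~\eqref{eq:fin3} to~\eqref{eq:fin3_1}. That equation always has the trivial root $\gamma=0$, giving the preimage $X=-B/2$; the entire problem is then to manufacture a second, nonzero admissible root.

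To create such a root I would impose the ansatz $\gamma^{p}=\zeta\gamma$. Because $\zeta^{p^{2}+p+1}=-1$, this immediately yields $\gamma^{p^{3}}=\zeta^{p^{2}+p+1}\gamma=-\gamma$, so any nonzero $\gamma$ of this shape is automatically admissible. Running the ansatz through the reduction of~\eqref{eq:fin3_1} collapses it to the single condition $\gamma^{2}=4/z$, where $z=1+4(\zeta-1)\mu$ and $\mu=1/(\alpha+2)$. Since $4$ is a square while $\eta(z)=-1$ by hypothesis, $w:=4/z$ is a nonsquare in $\F_{p^{3}}$; its square roots then lie in $\F_{p^{6}}\setminus\F_{p^{3}}$ and are nonzero, which is precisely what a second genuine preimage requires.

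The main obstacle, and the only genuinely delicate point, is that the ansatz $\gamma^{p}=\zeta\gamma$ is a priori only consistent up to sign. Solving $\gamma^{2}=4/z$ secures the squared relation $\gamma^{2p}=\zeta^{2}\gamma^{2}$ — this is exactly where the compatibility condition~\eqref{eq:fin3_2} (equivalently $w^{p-1}=\zeta^{2}$) is used — but squaring cannot separate $\gamma^{p}=\zeta\gamma$ from $\gamma^{p}=-\zeta\gamma$, and the two signs lead to different equations, so I must rule the wrong one out. I would do this with a quadratic-character computation: from $w^{p-1}=\zeta^{2}$ we get $w^{(p-1)/2}=\pm\zeta$, and raising to the power $p^{2}+p+1$ gives $w^{(p^{3}-1)/2}=\eta(w)=-1$; since $p^{2}+p+1$ is odd and $\zeta^{p^{2}+p+1}=-1$, the minus sign would produce $+1$, a contradiction, forcing $w^{(p-1)/2}=\zeta$ and hence $\gamma^{p}=\zeta\gamma$. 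This pins down a genuine nonzero admissible root $\gamma$, so the corresponding $\lambda\neq 0$ gives a preimage $X=-B/2+\lambda\neq -B/2$ of $g=h^{p}$. Thus $\tilde{f}$, and with it $f$, fails to be injective. Once this sign bookkeeping is settled, the remaining steps are routine substitutions, so I expect the sign/consistency verification to be the crux of the argument.
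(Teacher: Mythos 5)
Your proposal is correct and takes essentially the same approach as the paper: choose $h\neq 0$ with $h^{p^3}=-h$ so that $t=0$, impose the ansatz $\gamma^p=\zeta\gamma$ with $\zeta^{p^2+p+1}=-1$ to collapse \eqref{eq:fin3_1} to $\gamma^2=4/z$, and use $\eta(z)=-1$ together with the compatibility condition \eqref{eq:fin3_2} to produce a second admissible root of \eqref{eq:fin3}, hence a second preimage of $g=h^p$. Your explicit quadratic-character resolution of the sign ambiguity $\gamma^p=\pm\zeta\gamma$ (via $\gamma^{p-1}=(\gamma^2)^{(p-1)/2}=w^{(p-1)/2}$ and $\eta(w)=-1$) is a welcome sharpening: the paper works only with the squared relation $\gamma^{2p}=\zeta^2\gamma^2$ and leaves that consistency check implicit.
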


We included the prior result since one might wonder if $t=0$ would be sufficient to show the conjecture for $k=3$, but as one sees above, while it covers some ground, it cannot cover the entire set of $\alpha$'s.

However, below, for every $\alpha$ (hence $\mu$), we shall find some value of $h$ for which the prior equation has no solutions $\gamma$ (with $\gamma^{p^3}=-\gamma$).
We first show that there exists $h\neq 0$ such that
$T:=(1-2\mu)t$ satisfies $T^p=-T$.
For that, we will show the existence of $h\neq 0$ such that 
\[
\dfrac{h^{p^3}+h}{h^{p^3}-h}=\left(\frac{T}{1-2\mu} \right)^{1/p},\text{ that is, } h^{p^3}+\frac{1+\ell}{1-\ell}\, h=0,
 \]
where $\ell=\left(\frac{T}{1-2\mu} \right)^{1/p}$ (observe that $\ell^{p^3}+\ell=0$). The prior displayed equation has a nonzero root if we can show that the  linearized polynomial  $X^{p^3}+a X$ is not a permutation (since then the dimension of the kernel is greater than~$1$, and a nonzero root exists). 

We know~\textup{\cite{ZWW20}} that a linearized polynomial of the form $L(x)=x^{p^r}+a x\in\F_{p^n}$ is a permutation polynomial if and only if the relative norm $N_{\F_{p^n}/\F_{p^d}}(a)\neq 1$, that is, $(-1)^{n/d} a^{(p^n-1)/(p^d-1)}\neq 1$, where $d=\gcd(n,r)$. In our case, $r=3$, $n=6$, and  $a=\frac{1+\ell}{1-\ell}$. 
But we quickly see that since $\ell^{p^3}+\ell=0$, the norm is~$1$ and consequently the mentioned binomial is not a permutation (since the corresponding polynomial is linearized, then the kernel has dimension at least one, and so, we have at least $p$ roots). Thus, regardless of $\alpha$, such an $h$ exists (in fact, exactly $p^3$ roots, all given by $a\,h_0$, where  $a\in\F_{p^3}$ and $h_0\neq 0$) that will render $T$ with $T^p=-T$ (note that $\omega:=T^2\in\F_p$). 

We will next show that the following equation
\begin{equation}
\label{eq:fin3_3}
\gamma^{p+2}- \frac{1-4\mu}{4}  \gamma^p -  \frac{T}{4}\, \gamma^2-\mu\, \gamma+ T =0
\end{equation}
has no solution~$\gamma$ in $\F_{p^6}$ satisfying $\gamma^{p^3}=-\gamma$, or it has more than one.

Observe that since $\gamma^{p^3}=-\gamma\neq 0$, we have that $\gamma^p=\zeta\gamma$, where $\zeta=\gamma^{p-1}$ and $\zeta^{p^2+p+1}=-1$.  We now fix some $\zeta\in\F_{q^2}$ with $\zeta^{p^2+p+1}=-1$ and look for $\gamma^p=\zeta\gamma$. We will show that regardless of what $\zeta$ is, there exists no $\gamma$ or there are more than one satisfying~\eqref{eq:fin3_3}. We thus consider
\[
g(\gamma):=\zeta \gamma^3-\frac{1-4\mu}{4}  \zeta \gamma -  \frac{T}{4}\, \gamma^2-\mu\, \gamma+ T=0.
\] 
If the above equation  has no solution $\gamma$ with $\gamma^p=\zeta\gamma$, we are done. If it has one solution in $\F_{q^2}$,  we will actually  show that it has more than one.  
Since the characteristic $p\ne 2,3$, we can easily solve the equation (we used SageMath, here) using classical formulas (we use $\gamma_i$, $i=1,2,3$, to denote the roots in some extension of $\F_{q^2}$), and obtain (for easy writing, and also because it will be useful later, we use  $D:=\left({\frac{T (\omega -18 \zeta (4 \mu  (\zeta-1)+47 \zeta))+6 \zeta \sqrt{-48 \omega ^2+3 \omega  \left(6623 \zeta^2-16 \mu ^2  (\zeta-1)^2+1160 \mu  (\zeta-1) \zeta\right)-48 \zeta (4 \mu -4 \mu  \zeta+\zeta)^3}}{\zeta^3}}\right)^{1/3}$)
\allowdisplaybreaks
\begin{equation}
\begin{split}
\label{eq:gammas1}
\gamma_1&=\frac{1}{24} \left(\frac{\left(1-\sqrt{-3}\right) (12 \zeta (4 \mu  (\zeta-1)-\zeta)-\omega )}{D \zeta^2}- \left(1+\sqrt{-3}\right) D+\frac{2 T}{\zeta}\right),\\
   \gamma_2&=\frac{1}{24} \left(\frac{\left(1+\sqrt{-3}\right)
   (12 \zeta (4 \mu  (\zeta-1)-\zeta)-\omega )}{D \zeta^2}-\left(1-\sqrt{-3}\right) D+\frac{2 T}{\zeta}\right),\\
   \gamma_3&= \frac{\zeta
   \left(\zeta \left(D^2-48 \mu +12\right)+D T+48 \mu \right)+\omega }{12 D \zeta^2}.
   \end{split}
   \end{equation}
We note that $1\pm \sqrt{-3}\in\F_{q^2}$ (since the splitting field of $x^2+3$ is either $\F_p$, or $\F_{p^2}$, which is a subfield of $\F_{q^2}$).
We assume that  at least one of these roots in~\eqref{eq:gammas1}  are in $\F_{q^2}$. Thus, at least one of $\gamma_i^{p^3}+\gamma_i=0$ holds. Via the quadratic reciprocity, we observe that if $a=\pm\sqrt{-3}$, then $a^p=a$, when $p\equiv 1\pmod 6$, and $a^p=-a$, when $p\equiv 5\pmod 6$; since the computations are similar, sure, with different expressions that are common to all factorizations, without loss of generality, we consider only the case of $a^p=a$, so $p\equiv 1\pmod 6$. We fully simplify the expressions $\gamma_i^{p^3}+\gamma_i$,   and we infer that at least one of the following  must vanish, namely
\begin{equation}
\begin{split}
\label{eq:gammas_p3}
& \left(D^{p^3}+D\right)  \left(-1+\sqrt{-3}\right) \left(\zeta^2 \left(\theta D^{p^3+1}-48 \mu +12\right)  +48 \mu  \zeta+\omega \right),\\
 &  \left(D^{p^3}+D\right) \left(-1-\sqrt{-3}\right) \left(\zeta^2 \left(\frac{1}{\theta} D^{p^3+1}-48 \mu +12\right)+48 \mu  \zeta+\omega \right),\\
& \left(D^{p^3}+D\right) \left(\zeta^2 \left(D^{p^3+1}-48 \mu +12\right)  +48 \mu  \zeta+\omega \right),
 \end{split}
\end{equation}
where $\theta=\frac{\left(1+\sqrt{-3}\right)}{\left(1-\sqrt{-3}\right)}$ (observe that $\theta^2+\theta+1=0$).
As a side note, from the above relations, we  also see that the roots are distinct.
If $D^{p^3}+D=0$, then all $\gamma_i$ satisfy $\gamma_i^{p^3}+\gamma_i=0$, and so, if one root is in $\F_{p^6}$, all roots are there. If $D^{p^3}+D\neq 0$, then the second parenthesis in one of the prior expressions must vanish. Therefore, regardless of which one does vanish, $D^{p^3+1}$ must be in $\F_{p^3}$ (note that  $\gcd(p^3+1,p^3-1)=2$), so $D\in\F_{p^6}$. Thus, there exist $a,b\in\F_{p^3}$ such that $D^2=aD+b$, $ab\neq 0$. Without loss of generality, we assume that $\gamma_1\in\F_{p^6}$.  
Now, from the expression of $D$, we infer that
\begin{align*}
D^3&=D(aD+b)=(a^2+b)D+ab=a_1D+b_1=T c_1+\sqrt{d_1},\\
&a_1=(a^2+b),b_1=ab\neq 0,c_1=\frac{ (\omega -18 \zeta (4 \mu  (\zeta-1)+47 \zeta))}{\zeta^3},\\
&d_1=\frac{36(-48 \omega ^2+3 \omega  \left(6623 \zeta^2-16 \mu ^2  (\zeta-1)^2+1160 \mu  (\zeta-1) \zeta\right)-48 \zeta (4 \mu -4 \mu  \zeta+\zeta)^3)}{\zeta^2},
\end{align*}
with all parameters in $\F_{p^3}$, and we can write 
\[
a_1^2 b + b_1^2 + c_1^2\omega+  a_1(a a_1 + 2 b_1 ) D  - 2 c_1 (b_1 + a_1 D) T =d_1,
\]
or,
\begin{align*}
D a_1\left(a a_1+2  b_1-2 c_1 T\right)+a_1^2 b+b_1^2+c_1^2 \omega-2 b_1 c_1 T = d_1.
\end{align*}
If $a_1=0$,  then $D^3=ab\in\F_{p^3}$. Since $3\,|\,p^3-1$,  for $p\equiv 1\pmod 6$, and so, $D^{p^3-1}=(D^3)^{(p^3-1)/2}=1$, therefore $D\in\F_{p^3}$.
Now, we let $a_1\neq 0$. If $c_1=0$, then $a_1 D+b_1=\sqrt{d_1}$, which by squaring (and replacing $D^2=aD+b$), we get $b (a^4 + 3 a^2 b + b^2) + a (a^2 + b) (a^2 + 3 b) D=d_1$, so   
 $D\in\F_{p^3}$, unless $a^2+3b=0$, but then $b(a^4+3a^2b+b^2)=0$ would imply $b=0=a$, a contradiction to $a_1\neq 0$.

 We can now assume that $a_1c_1\neq 0$.
 Thus, the coefficient of $D$ in the prior displayed equation is not zero (since $T\notin\F_{p^3}$), and so $D=\frac{u_1+v_1T}{u_2+v_2T}=\frac{(u_1+v_1T)(u_2-v_2T)}{u_2^2-v_2^2 \omega}=\frac{u_1u_2-v_1v_2\omega+(u_2v_1-u_1v_2)T}{u_2^2-v_2^2 \omega}=A+BT$, for  $A=\frac{u_1u_2-v_1v_2\omega}{u_2^2-v_2^2 \omega}$, $B=\frac{(u_2v_1-u_1v_2)}{u_2^2-v_2^2 \omega}$, and $u_1=d_1-a_1^2b-b_1^2-c_1^2\omega$, $v_1=2b_1c_1$, $u_2=a_1(a a_1+2b_1)$, $v_2=-2a_1c_1$.

 Plugging this back into $a_1D+b_1=Tc_1+\sqrt{d_1}$, expanding, separating the radical and squaring, to get rid of it, we obtain the following equation involving $T$ and coefficients in $\F_{p^3}$, 
 {\small
 \begin{align*}
\frac{2 c_1  \left(b^3+c_1^2 \omega
 -d_1\right)}{\left(a^2+b\right) \left((a^3+3ab)^2-4 c_1^2
   \omega \right)}T+\frac{a \left(a^6 b+6 a^4 b^2+a^2 \left(10 b^3+c_1^2 \omega-d_1\right)+b \left(3 b^3-c_1^2 \omega -3 d_1\right)\right)}{\left(a^2+b\right) \left((a^3+3ab)^2-4 c_1^2 \omega \right)}=0.
 \end{align*}
 }
Note that the denominator is not zero, since we assumed that $a_1\ne 0$ and if the second parenthesis were zero, then $T^2=\omega$ would be a square in $\F_{p^3}$ and that is also not possible. 
 
 Since $1,T$ are independent over $\F_{p^3}$,  their coefficients must be zero. We further simplify and find that the only values of $c_1,\sqrt{d_1}$ are in fact
 \begin{align*}
  c_1&=0, \sqrt{d_1}= \frac{d_1-b^3}{a \left(a^2+3
   b\right)},\text{ or}\\
  c_1&= \pm\sqrt{\frac{a^6+6 a^4 b+9 a^2 b^2+2 b^3-2
   d_1}{ 2\omega }},\sqrt{d_1}= \frac{1}{2} a \left(a^2+3   b\right).
 \end{align*}
Going diligently through all these simple cases, we infer that $D$, or $D^2\in\F_{p^3}$ (this last one can happen when $c_1=0$, but then $D^{p^3}=\pm D$ (since $\gcd(p^3+1,p^3-1)=2$), which under the condition that $D^{p^3}+ D\neq 0$, renders  also that $D\in\F_{p^3}$). Regardless of the case, we always get that $D\in\F_{p^3}$, 
but then all of $\gamma_i\in\F_{p^3}$, and so, we cannot have any of them satisfy $\gamma_i^{p^3}+\gamma_i=0$, unless that particular $\gamma_i=0$, but that is impossible, since~$T\neq 0$.

 We put together the previous discussions in the next theorem  (recall, and easy to show, that if $\alpha\in\{0,-2\}$, the polynomial is easily shown to be a non-permutation).
 \begin{thm}
Let $q=p^3$, where $p$ is a prime. Then, for $\alpha \in \F_q$,  the trinomial
\[
f(X)=X^{q(p-1)+1}+ \alpha X^{pq}+X^{q+p-1} 
\]
is not a permutation polynomial over $\F_{q^2}$.
\end{thm}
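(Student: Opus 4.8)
The goal is to exhibit, for every $\alpha\in\F_q$, a value $g$ whose fiber under $f$ does not have exactly one element; since the reductions leading to Equation~\eqref{eq:fin3} are reversible (the maps $g\mapsto h^p$ and $\alpha\mapsto\beta^p$ are bijections, and for $h\neq0$ the solutions $X$ of $f(X)=g$ correspond bijectively to the roots $\gamma$ of~\eqref{eq:fin3} satisfying $\gamma^{p^3}=-\gamma$, this last condition being exactly $\lambda=\gamma B\in\F_{p^3}$ because $B^{p^3}=-B$), it suffices to produce, for each $\mu=\frac1{\alpha+2}$, one $h\neq0$ for which~\eqref{eq:fin3} has no admissible root. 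The cases $\alpha\in\{0,-2\}$ are disposed of directly as already noted, so I assume $\alpha\neq-2$ and work with $\mu\in\F_q^*$. Rather than relying on the special choice $t=0$ (which, as shown, reaches only part of the $\alpha$'s), I would force the coefficient data of~\eqref{eq:fin3} to be as rigid as possible.

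Concretely, I would first choose $h$ so that $T:=(1-2\mu)t$ satisfies $T^p=-T$, whence $\omega:=T^2\in\F_p$. Writing the required value of $(h^{p^3}+h)/(h^{p^3}-h)$ explicitly reduces the existence of such an $h\neq0$ to the non-surjectivity of a linearized binomial $X^{p^3}+aX$ on $\F_{p^6}$; by the norm criterion of~\cite{ZWW20}, the relevant norm equals $1$ precisely because the auxiliary element $\ell$ obeys $\ell^{p^3}+\ell=0$, so the binomial has a nontrivial kernel and the desired $h$ exists. Fixing such an $h$, Equation~\eqref{eq:fin3} becomes~\eqref{eq:fin3_3}. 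For any admissible root one has $\gamma^{p^3}=-\gamma\neq0$, hence $\zeta:=\gamma^{p-1}$ satisfies $\gamma^p=\zeta\gamma$ and $\zeta^{p^2+p+1}=-1$; substituting $\gamma^p=\zeta\gamma$ collapses~\eqref{eq:fin3_3} into the genuine cubic $g(\gamma)=\zeta\gamma^3-\tfrac{1-4\mu}{4}\zeta\gamma-\tfrac{T}{4}\gamma^2-\mu\gamma+T$, which (since $p\neq2,3$) I solve by Cardano, obtaining the three roots $\gamma_1,\gamma_2,\gamma_3$ as rational expressions in a single cube root $D$.

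The heart of the argument is then to rule out $\gamma_i^{p^3}=-\gamma_i$. Applying Frobenius to the Cardano expressions and simplifying, each $\gamma_i^{p^3}+\gamma_i$ factors as in~\eqref{eq:gammas_p3} into $\bigl(D^{p^3}+D\bigr)$ times a factor affine in $D^{p^3+1}$. If $D^{p^3}+D\neq0$ this forces $D^{p^3+1}\in\F_{p^3}$, hence $D\in\F_{p^6}$ and $D^2=aD+b$ with $a,b\in\F_{p^3}$; feeding this back into the defining identity $D^3=Tc_1+\sqrt{d_1}$ (with $c_1,d_1\in\F_{p^3}$) and separating the $\F_{p^3}$-part from the $T$-part through the independence of $1,T$ over $\F_{p^3}$ (using $T^2=\omega\in\F_p$), I would run the short case analysis on $a_1,c_1$ and conclude in every branch that $D$, or $D^2$, lies in $\F_{p^3}$; combined with $\gcd(p^3+1,p^3-1)=2$ and $D^{p^3}+D\neq0$ this yields $D\in\F_{p^3}$. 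But then all $\gamma_i\in\F_{p^3}$, so $\gamma_i^{p^3}=\gamma_i$, and admissibility $\gamma_i^{p^3}=-\gamma_i$ forces $\gamma_i=0$, impossible since $g(0)=T\neq0$. Thus for this $h$ Equation~\eqref{eq:fin3_3} has no admissible root, $f$ misses the value $h^p$, and $f$ is not a permutation. I expect the decisive difficulty to be exactly this last step: pushing the explicit Cardano radicals through Frobenius and reducing the resulting identities—without the argument degenerating when $a_1=0$ or $c_1=0$—is the delicate computational core that ultimately pins $D$ to $\F_{p^3}$.
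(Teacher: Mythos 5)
Your proposal retraces the paper's own route essentially step for step: the same trace rewriting and reduction via the Coulter--Henderson lemma, the same construction of an $h$ making $T^p=-T$ through the norm criterion for the linearized binomial $X^{p^3}+aX$, and the same Cardano resolution followed by pushing Frobenius through the radicals. But there is a genuine gap at the decisive step, and it sits exactly where you narrowed your goal. The factorizations in \eqref{eq:gammas_p3} exhibit $D^{p^3}+D$ as a common factor of all three expressions $\gamma_i^{p^3}+\gamma_i$, and your analysis only treats the branch $D^{p^3}+D\neq 0$. When $D^{p^3}+D=0$, every root of the cubic \emph{automatically} satisfies $\gamma_i^{p^3}=-\gamma_i$, so no contradiction with admissibility can be extracted, and your claim that the chosen $h$ yields \emph{no} admissible root is then not provable and may simply be false. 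This branch cannot be dismissed a priori: $D$ is only defined as a cube root of $Tc_1+\sqrt{d_1}$, and nothing in the construction prevents, say, $D^2\in\F_{p^3}$ with $D\notin\F_{p^3}$, which gives precisely $D^{p^3}=-D$. The paper closes this case with the other horn of the dichotomy that you stated in your first sentence (``fiber does not have exactly one element'') but then abandoned: it shows that if $D^{p^3}+D=0$ and one root lies in $\F_{p^6}$, then all three roots lie there and all are admissible, and, since the same relations show the three roots are pairwise distinct, the fiber over the corresponding value $g=h^p$ contains \emph{more than one} element. In that branch it is non-injectivity, not a missed value, that defeats the permutation property; without it your argument does not cover all $\zeta$ (hence not all $\alpha$), because for a fixed $h$ different admissible $\zeta$'s produce different $D$'s, and some of them may satisfy $D^{p^3}+D=0$.

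Two smaller points. First, the bijective correspondence between solutions $X$ of $f(X)=g$ and admissible roots $\gamma$ requires $B\neq 0$, i.e.\ $h\notin\F_{p^3}$, not merely $h\neq 0$; this does hold for your $h$ (it is forced by $T\neq 0$), but it should be said, since otherwise \eqref{eq:fin3_3} is not even defined. Second, the explicit factorizations \eqref{eq:gammas_p3} depend on how Frobenius acts on $\sqrt{-3}$ (equivalently on $p \bmod 6$); the paper reduces to the case $p\equiv 1\pmod 6$ and notes the complementary case is parallel, and this reduction also matters later, where $3\mid p^3-1$ is used in the subcase $a_1=0$ to conclude $D\in\F_{p^3}$ from $D^3\in\F_{p^3}$. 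Your write-up should record both reductions explicitly.
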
 

\section{Proof of the conjecture for \texorpdfstring{$k=2,\alpha= -1$}{}}

We start with showing the permutation property of the polynomial when $k=2,\alpha=-1$.
\begin{thm}
Let $q=p^2$, where $p$ is a prime. Then the polynomial
\[
f(X)=X^{q(p-1)}-X^{pq}+X^{q+p-1}
\]
is a permutation polynomial on $\F_{q^2}$.
\end{thm}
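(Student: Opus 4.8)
The plan is to recycle the reduction from the $k=3$ section but to terminate it with an injectivity statement rather than with a count of rational points. Since $\alpha=-1\in\F_q$, exactly as before one has $f(X)=\bigl(-X^p+\Tr(X^{q+p-1})\bigr)^q$ with $\Tr$ the trace of $\F_{q^2}=\F_{p^4}$ over $\F_q=\F_{p^2}$, so it suffices to prove that $h(X):=-X^p+\Tr(X^{q+p-1})$ permutes $\F_{q^2}$. First I would locate preimages: if $h(X)=h(X')$, then raising $h(X)=g$ to the $q$-th power and subtracting (using $X^{q^2}=X$) gives $X^{p^3}-X^p=g-g^q$ for each solution, whence $(X-X')^{p^3}=(X-X')^p$; this forces $(X-X')^p\in\F_q$ and hence $X'=X+\nu$ with $\nu\in\F_q$. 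Thus $h$ is a permutation if and only if $h(X+\nu)\neq h(X)$ for every $X\in\F_{q^2}$ and every $\nu\in\F_q^\ast$; the case $X\in\F_q$ is immediate because there $h(X)=X^p$ and $h(X+\nu)-h(X)=\nu^p\neq 0$.

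The next step is a pair of simplifications. On one hand $f-h=X^p-X^{pq}$ and $(X+\nu)^p-(X+\nu)^{pq}=X^p-X^{pq}$ for $\nu\in\F_q$, so $f(X+\nu)-f(X)=h(X+\nu)-h(X)$; as the former is the $q$-th power of the latter, the difference $h(X+\nu)-h(X)$ automatically lies in $\F_q$. On the other hand, writing $h(X)=-a^p+a^{p-1}b+ab^{p-1}$ in the conjugate coordinates $a=X$, $b=X^q$, and clearing the denominator $(X+\nu)(X^q+\nu)$, the condition $h(X+\nu)=h(X)$ becomes $E(a,b,\nu)=0$ for a polynomial $E$ that is homogeneous of degree $p+2$ in $(a,b,\nu)$. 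Scaling by $\nu^{-1}$ (legitimate since $\nu\neq 0$) collapses everything to the single equation $\Phi(\xi):=E(\xi,\xi^q,1)=0$, in which $\xi\notin\F_q$ corresponds to $X\notin\F_q$, so the whole problem becomes the statement that $\Phi$ has no root $\xi\in\F_{q^2}\setminus\F_q$. In symmetric coordinates $s=\xi+\xi^q$, $n=\xi\,\xi^q$, $\Delta=(\xi-\xi^q)^2$ (all in $\F_q$, with $\Delta$ a non-square precisely when $\xi\notin\F_q$), a short computation gives
\[
\Phi=(s^2+s+1-3n)+\Delta\bigl(nP_{p-2}+P_{p-1}\bigr),\qquad P_k=\frac{\xi^k-\xi^{qk}}{\xi-\xi^q}.
\]

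The observation that makes $\Phi$ tractable is that the Frobenius‑twisted quotients $P_{p-1},P_{p-2}$ admit closed forms. From $\xi^p+\xi^{qp}=(\xi+\xi^q)^p=s^p$ and $\xi^p-\xi^{qp}=(\xi-\xi^q)^p=(\xi-\xi^q)\Delta^{(p-1)/2}$ one reads off $P_p=\Delta^{(p-1)/2}$, and the recurrence $P_{k+1}=sP_k-nP_{k-1}$ then yields $P_{p-1}=\dfrac{s\Delta^{(p-1)/2}-s^p}{2n}$ together with $nP_{p-2}=sP_{p-1}-\Delta^{(p-1)/2}$ (here $n\neq 0$ automatically, since $\xi\notin\F_q$ forces $\xi,\xi^q\neq 0$). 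Substituting these turns $\Phi=0$ into one explicit polynomial relation among $s$, $n$, $\Delta$, $\Delta^{(p-1)/2}$ and $s^p$ over $\F_q$, eliminating the only genuinely $p$-dependent unknowns.

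The main obstacle is then to show that this relation is incompatible with $\eta(\Delta)=-1$. Here I would exploit that $\Delta$ being a non‑square means $\Delta^{(q-1)/2}=\bigl(\Delta^{(p-1)/2}\bigr)^{p+1}=-1$, apply the $p$-power Frobenius to the relation to produce a second, conjugate relation (in which $s^p\mapsto s$ and $\Delta^{(p-1)/2}\mapsto-\Delta^{-(p-1)/2}$), and eliminate the twisted terms between the two relations so as to force an identity of the shape ``non-square $=$ square'' in $\F_q$, a contradiction. I expect this elimination to be the delicate part, and the point where the specific value $\alpha=-1$ and the hypothesis $k=2$ are really used. Should a purely algebraic contradiction prove unwieldy, the fallback is to read $\Phi=0$ as a fixed plane curve of degree $p+1$ over $\F_q$ and to argue, as in the earlier sections, that all of its relevant $\F_q$-points lie on $\xi=\xi^q$.
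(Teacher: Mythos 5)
Your setup is sound --- I checked that your reduction to ``$h(\xi+1)\neq h(\xi)$ for $\xi\notin\F_q$'', the identity $\Phi=(s^2+s+1-3n)+\Delta\bigl(nP_{p-2}+P_{p-1}\bigr)$, and the closed forms $P_p=\Delta^{(p-1)/2}$, $P_{p-1}=\bigl(s\Delta^{(p-1)/2}-s^p\bigr)/(2n)$ are all correct --- but the proof stops exactly where the theorem begins. Substituting the closed forms, your relation reads $A_0-B_0s^p+C_0e=0$ with $e=\Delta^{(p-1)/2}$, $A_0=2n(s^2+s+1-3n)$, $B_0=\Delta(s+1)$, $C_0=\Delta(s^2+s-2n)$. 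Carrying out the Frobenius elimination you propose (using $e^{p+1}=-1$ and $s^{p^2}=s$, so that $A_0^p-B_0^ps=(A_0-B_0s^p)^p$) yields the norm equation
\[
N_{\F_q/\F_p}\bigl(A_0-B_0s^p\bigr)=-N_{\F_q/\F_p}(C_0).
\]
This cannot produce a contradiction of the shape ``non-square $=$ square'': the norm map $\F_q^*\to\F_p^*$ is surjective, and moreover every element of $\F_p^*$ is a square in $\F_q$ (since $q=p^2$), so no quadratic-character obstruction survives at this level. What remains after your elimination is a genuine polynomial system in the two unknowns $(s,n)\in\F_q^2$, and proving it has no solution with $\eta(s^2-4n)=-1$ is essentially a restatement of the theorem itself. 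That decisive step is the missing content of the proof, not a routine ``delicate part''; nothing in the proposal indicates how it would be done.

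The fallback is also not viable. The machinery of the paper's earlier sections (absolute irreducibility plus the Aubry--Perret bound) proves the \emph{existence} of rational points, i.e., non-permutation; it cannot show that $\Phi=0$ has \emph{no} points off $\xi=\xi^q$. Here, with $\deg\Phi\approx p$ and $q=p^2$, the lower bound $q+1-(d-1)(d-2)\sqrt{q}$ is negative, hence vacuous, and excluding all rational points would require proving that no absolutely irreducible component is defined over $\F_q$ and then controlling the remaining points by hand --- a different and much harder argument. For comparison, the paper finishes with a mechanism that your symmetric-coordinate formulation discards: it keeps the unknown itself in play, writing every preimage as $X=u-h$ with $u\in\F_{p^2}$ and $u^p=\zeta u$ for a fixed $\zeta$ with $\zeta^{p+1}=1$, so that the fiber equation becomes a cubic in $u$; an appropriate $\zeta$-weighted combination of that cubic with its Frobenius image cancels both the $u^3$ and $u^2$ terms, leaving a linear equation that pins down $u$ uniquely (the degenerate case of a vanishing linear coefficient forces one special $\zeta$ and is excluded separately). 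That collapse to a linear equation in a single unknown is what closes the paper's proof, and it has no counterpart in your plan.
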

\begin{proof}
As for the prior case of $k=3$, we just need to consider the equation
\begin{equation}
\label{eq1_2}
\alpha X^p+\Tr(X^{p^2+p-1})=\alpha X^p+X^{p^2+p-1}+X^{p^3-p^2+1}=g=h^p,
\end{equation}
where $g\in\F_{p^4}$.
Raising to the $p^2$ power, we get (we use  that $\alpha\in\F_{p^2},X\in\F_{p^4}$, so, $\alpha^{p^2}=\alpha,X^{p^4}=X$)
\begin{equation}
\label{eq2_2}
\alpha X^{p^3}+X^{p^2+p-1}+X^{p^3-p^2+1}=h^{p^3}.
\end{equation}
Combining Equations~\eqref{eq1_2} and~\eqref{eq2_2}, we get
\[
\alpha(X^{p^3}-X^p)+h^p-h^{p^3}=0.
\]
Of course, if $\alpha=-1$,   we get the equation   $(X+h)^{p^2}=(X+h)$, so 
  $X+h\in\F_{p^2}$.
   We need to show that the original equation has a unique root for any $h$, to infer the necessity of the claim of the conjecture (if $k=2,\alpha=-1$). 
   We thus write $X=u-h$, $h=g^{1/p}$, $u\in\F_{p^2}$, and replace into~\eqref{eq1_2}. 
   If $h\in\F_{p^2}$, then $X\in\F_{p^2}$ and so, the original Equation~\eqref{eq1_2} simplifies to $-u^p+u^p+u^p=0$ (since $u^{p^2-1}=1$), and so, $u=0$ is the unique solution. Let $h\in\F_{p^4}\setminus \F_{p^2}$. 
Note that if $0\ne u$ is a root of~\eqref{eq1_2}, then $u^p=\zeta u$, where $\zeta^{p+1}=1$. To show that the root of~\eqref{eq1_2} is unique, we fix $\zeta\in\F_{p^2}$ with $\zeta^{p+1}=1$ and show that there exists only one $u$ vanishing~\eqref{eq1_2} such that $u^p=\zeta u$. That would show the permutation property of our polynomial.

When $u^p=\zeta u$, Equation~\eqref{eq1_2} transforms into
\begin{equation}
\begin{split}
\label{eq:u0}
&h^{p^3+2}+h^{2 p^2+p}+u \left(\zeta  \left(-h^{2 p^2}+h^{p^2+1}-h^2\right)-2 \left(h^{p^3+1}+h^{p^2+p}\right)\right)\\
&\qquad\qquad \qquad\qquad  +u^2 \left(\zeta  \left(h^{p^2}+h\right)+h^p+h^{p^3}\right)-u^3 \zeta =0.
\end{split}
\end{equation}
Raising to the $p$-power, we obtain
\begin{align*}
&h^{2  p^3+p^2}+h^{2 p+1}+u \left(-h^{2 p^3}+h^{p^3+p}+\zeta  \left(-2 h^{p^3+p^2}-2 h^{p+1}\right)-h^{2 p}\right)\\
&\qquad\qquad \qquad\qquad +u^2 \left(\zeta ^2 \left(h^{p^2}+h\right)+\zeta 
   \left(h^{p^3}+h^p\right)\right)-u^3 \zeta ^2=0.
\end{align*}
Multiplying the second equation by $\zeta$ and subtracting it from the first 
we obtain
\begin{align*}
&\zeta h^{p^3+2}+\zeta h^{2 p^2+p}-h^{2 p^3+p^2}-h^{2 p+1} +u \left(-2 \zeta h^{p^3+1}+h^{2
   p^3}-h^{p^3+p}\right.\\
   &\left. -\zeta^2 h^{2 p^2} +\zeta^2 h^{p^2+1}-2 \zeta h^{p^2+p}+2 \zeta h^{p^3+p^2}+2 \zeta h^{p+1}+h^{2
   p}-h^2 \zeta^2\right)=0,
\end{align*}
and if the coefficient of $u$ is nonzero, we get the solution
{\small
\begin{equation}
\label{eq:u}
u=\frac{-\zeta h^{p^3+2}-\zeta h^{2 p^2+p}+h^{2 p^3+p^2}+h^{2 p+1}}{-2 \zeta h^{p^3+1}+h^{2
   p^3}-h^{p^3+p}-\zeta^2 h^{2 p^2}+\zeta^2 h^{p^2+1}-2 \zeta h^{p^2+p}+2 \zeta h^{p^3+p^2}+2 \zeta h^{p+1}+h^{2
   p}-h^2 \zeta^2},
\end{equation}
}
 hence uniqueness of the solution of~\eqref{eq1_2} in $\F_{p^2}$. If the coefficient of $u$ is zero, then we must  have $\zeta=\frac{h^{2 p^3+p^2}+h^{2 p+1}}{h^{p^3+2}+h^{2 p^2+p}}\neq 0$ (it is easy to check that this $\zeta$ satisfies the required $\zeta^{p+1}=1$, if nonzero). 

 Using the above expression of $\zeta$ into the conditional expression $u^p=\zeta u$, we infer that  $u^{p-1}=\dfrac{h^{2 p^3+p^2}+h^{2 p+1}}{h^{p^3+2}+h^{2 p^2+p}}=\dfrac{B^p}{B}=B^{p-1}$, where $B=h^{p^3+2}+h^{2 p^2+p}=H+H^{p^2}$, for $H=h^{2 p^2+p}$. Thus, $u=B \epsilon$, where $\epsilon\in\F_p$. We need to find the values of $\epsilon$ such that this $u$ satisfies~\eqref{eq1_2}. Observe that $B=H+H^{p^2}$, for $H=h^{2 p^2+p}$. Plugging $u=B \epsilon$ back into~\eqref{eq1_2} and simplifying, we obtain that the only possible values is $\epsilon=0$, so $u=0$, but then $h^{2  p^3+p^2}+h^{2 p+1}=0$, from~\eqref{eq:u0}, and that is impossible since $\zeta\neq 0$. 
\end{proof}

\section{\bf The proof of the conjecture for \texorpdfstring{$k=2$, $\alpha\neq -1$}{}}

   We now assume $\alpha\neq -1$ and show that the polynomial is not a permutation, when $k=2$.
We use the transformation $g\mapsto h^p$, $\alpha\mapsto \beta^p$, obtaining
\[
X^{p^2}-X-B=0,\text{ where } B=\frac{h^{p^2}-h}{\beta}.
\]

 Applying again Lemma~\ref{L1} to $X^{p^2}-X-B=0$, we infer that the solutions must be of the form $X=-B/2+\lambda$, $\lambda\in\F_{p^2}$,  and plugging this into~\eqref{eq1_2}, we   attempt to find at least two values of $\lambda$ for some fixed $h$ satisfying~\eqref{eq1_2}, or no value of $\lambda$ for such an $h$. 

 We therefore obtain
 \begin{align*}
& \beta^p \left(-\frac{B^p}{2}+ \lambda^p \right)
+\frac{\left(-\frac{B^{p^2}}{2}+ \lambda^{p^2}\right) \left(-\frac{B^p}{2}+ \lambda^p \right)}{\left(-\frac{B}{2}+ \lambda\right)}
+\frac{\left(-\frac{B^{p^3}}{2}+  \lambda^{p^3}\right)\left(-\frac{B }{2}+ \lambda \right)}{\left(-\frac{B^{p^2}}{2}+ \lambda^{p^2}\right)}=h^p.
 \end{align*}
 Note that $\lambda=\frac{B}{2}$ (hence $X=0$) is a solution only for $h=0$.
 Using $B^{p^2}=-B, \lambda^{p^2}=\lambda$, we get, after simplifying, 
and using the substitution $\gamma=\frac{\lambda}{B}$,  
\begin{equation}
 \label{eq:gamma_2}
 \gamma^{p+2}-\frac{(\beta-2)^p}{4(\beta+2)^p} \gamma^p-\frac{(\beta+\frac{2h}{B})^p}{4(\beta+2)^p} \gamma^2-\frac{1}{(\beta+2)^p} \gamma+\frac{(\beta+
 \frac{2h}{B})^p}{(\beta+2)^p}=0.
 \end{equation}
Replacing $B=\frac{h^{p^2}-h}{\beta}$, and using the notation $t:=(h^{p^3}+h^p)/(h^{p^3}-h^p)$ (note that $t^{p^2}=-t$), we get  
\begin{equation}
\label{eq:fin3_2_2}
\gamma^{p+2}- \frac{1-4\mu}{4}  \gamma^p -  \frac{(1-2\mu) t}{4}\, \gamma^2-\mu\, \gamma+ (1-2\mu) t =0.
\end{equation}
The same argument we used for $k=3$ shows that there exists $h$ such that $T=(1-2\mu)t$ satisfies $T^p=-T\neq 0$ (here, we use $\alpha\neq -1$), and the equation becomes
\begin{equation}
\label{eq:fin_k2_1}
\gamma^{p+2}- \frac{1-4\mu}{4}  \gamma^p -  \frac{T}{4}\, \gamma^2-\mu\, \gamma+ T =0.
\end{equation}
where $\mu=\dfrac{1}{\alpha+2}$,
 which is the same equation as for the $k=3$ case, sure, with the parameters satisfying other conditions. Note that $T\in\F_{p^2}$.

We will now show that for some $h$ outside~$\F_{p^2}$, we have  at least two values of $\gamma$  satisfying $\gamma^{p^2}=-\gamma$.

 As for the prior case of $k=3$, we can assume that $\gamma$  satisfies $\gamma^p=\zeta\gamma$, where $\zeta^{p+1}=-1$ (so that $\gamma^{p^2}=-\gamma$).
We now fix $\zeta$ with $\zeta^{p+1}=-1$ and we write Equation~\eqref{eq:fin_k2_1} as
\begin{equation}
\label{eq:fin_k2_2}
\zeta \gamma^{3}-  \frac{T}{4}\, \gamma^2- \frac{1-4\mu}{4}  \zeta \gamma  -\mu\, \gamma+ T =0,
\end{equation}
whose solutions are 
of the form (below, we let $\theta=\frac{\left(1+\sqrt{-3}\right)}{\left(1-\sqrt{-3}\right)}\in\F_{p^3}$; note that $\theta^2+\theta+1=0$),
\begin{equation*}
\begin{split}
\gamma_1&= \frac{\zeta \left(-D^2 \theta  \zeta+D (\theta +1) T+48 \mu  (\zeta-1)-12 \zeta\right)-\omega }{12 D (\theta
   +1) \zeta^2},\\
   \gamma_2&=  \frac{\zeta \left(-D^2 \frac1\theta\zeta+D (\frac1\theta +1) T+48    \mu  (\zeta-1)-12   \zeta\right)- 
   \omega }{12 D (\frac1\theta +1) \zeta^2},\\
   \gamma_3&= \frac{\zeta \left(\zeta \left(D^2-48 \mu +12\right)+D T+48 \mu \right)+\omega }{12 D \zeta^2}.
\end{split}
\end{equation*}
We rewrite the above expressions as
\begin{equation}
\begin{split}
\label{eq:gammas_k2}
12\gamma_1&= D \theta^2-\frac{\theta  \left(-\omega +48 \mu  \zeta^2-12 \zeta^2-48 \mu  \zeta\right)}{D \zeta^2}+\frac{T}{\zeta}=\theta^2 D+\theta a_2 \frac{1}{D}+a_3,\\
12\gamma_2&= D \theta  -\frac{\theta ^2 \left(-\omega +48 \mu  \zeta^2-12 \zeta^2-48 \mu 
   \zeta\right)}{D \zeta^2}+\frac{T}{\zeta}=\theta D+ \theta^2 a_2 \frac{1}{D}+a_3,\\
12\gamma_3&=D-\frac{-\omega +48 \mu \zeta^2-12 \zeta^2-48 \mu \zeta}{D \zeta^2}+\frac{T}{ \zeta}=D+a_2\frac1{D}+a_3,
\end{split}
\end{equation}
for the obvious $a_2,a_3\in\F_{p^2}$,  where  
\begin{align*}
 D^3&=T \zeta^{-3} (\omega -18 \zeta (4 \mu  (\zeta-1)+47 \zeta))\\
 &\qquad+6 \zeta^{-2} \sqrt{-48 \omega ^2+3 \omega  \left(6623 \zeta^2-16 \mu ^2  (\zeta-1)^2+1160 \mu  (\zeta-1) \zeta\right)-48 \zeta (4 \mu -4 \mu  \zeta+\zeta)^3}\\
&=Tc_1+\sqrt{d_1},
\end{align*}
for the obvious $c_1,d_1\in\F_{p^2}$. Note that $0\neq D^3\in\F_{p^4}$ (since we assume that at least one root exists).
Also, at least two of the roots are distinct, as one can see from the following argument.
If $\gamma_1=\gamma_2$ (similar arguments will work if other two roots are equal), then $D=\frac{a_2}{D}$, and so, $\gamma_1=\gamma_2=(\theta^2+\theta)D+a_3=-D+a_3\neq 2D+a_3=\gamma_3$ (unless $D=0$, but that is easily assessed to be impossible). Thus, if one of the roots is in $\F_{p^4}$, then $D\in\F_{p^4}$, and the other roots will be in~$\F_{p^4}$, as well.   
 
Without loss of generality (the computations are absolutely similar), we assume that $\gamma_1\in\F_{p^4}$. 
 Using the fact that  $(12\gamma_1)^{p^4}=(12\gamma_1)$, we get
\begin{align*}
&\theta \left(D^{p^4}-D\right)\left(\theta-a_2\frac{1}{D^{p^4+1}} \right)=0.
\end{align*}
If $D^{p^4}=D$, then if our equation has a root in $\F_{p^4}$ for that given $\zeta$, then it must have all roots in $\F_{p^4}$ (since now $D\in\F_{p^4}$). If $D^{p^4}\neq D$, then $D^{{p^4}+1}=\frac{a_2}{\theta}\in\F_{p^2}$. Because $3\,|\,{p^2}-1$, for $p>3$, and $D^3\in\F_{p^4}$, then $D^{{p^4}+1}=D^2 D^{{p^4}-1}=D^2 (D^3)^{\frac{{p^4}-1}{3}}\in\F_{{p^4}}$, and therefore, $D^2\in\F_{p^4}$.  Since $D^3\in \mathbb{F}_{p^4}$ we conclude $D\in \mathbb{F}_{p^4}$, yet again. But then from $12\gamma_1=\frac{\theta^2 D^2+\theta a_2}{D}+a_3$ implies that $\frac{1}{D}$ (and therefore, $D$) must be in $\F_{p^4}$, unless $\theta^2 D^2+\theta a_2=0$, but then $D^2\in\F_{p^2}$, rendering $D\in\F_{p^4}$, yet again.

 We summarize our discussion in the next theorem.
  \begin{thm}
Let $q=p^2$, where $p$ is a prime. Then, for $-1\neq \alpha \in \F_q$,  the trinomial
\[
f(X)=X^{q(p-1)+1}+ \alpha X^{pq}+X^{q+p-1} 
\]
is not a permutation polynomial over $\F_{q^2}$.
\end{thm}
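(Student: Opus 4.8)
The plan is to follow the same algebraic number theory route used for $k=3$, specialized to the field $\F_{p^4}$. Recall that $f$ permutes $\F_{q^2}$ precisely when $\alpha X^p+\Tr(X^{q+p-1})$ does, so I would fix a target $g=h^p$ and count the solutions of~\eqref{eq1_2}. Raising that equation to the $p^2$-power and subtracting eliminates the trace, leaving the linearized equation $X^{p^2}-X-B=0$ with $B=(h^{p^2}-h)/\beta$ and $\beta^p=\alpha$. Applying Lemma~\ref{L1} (with $n=2$, $\ell=4$, $d=2$, $m=2$, $A=1$, so that $\alpha_{m-1}=1$) shows that, whenever they exist, the solutions form the coset $X=-B/2+\lambda$, $\lambda\in\F_{p^2}$. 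Substituting this back and normalizing by $\gamma=\lambda/B$ converts the preimage-counting problem into the polynomial equation~\eqref{eq:fin3_2_2}, so that preimages of $g$ correspond to admissible roots $\gamma$.

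The next move is to pin down a convenient $h$. Using $\alpha\neq -1$, I would run the linearized-polynomial norm argument (exactly as in the $k=3$ case) to produce an $h\notin\F_{p^2}$ for which $T:=(1-2\mu)t$ satisfies $T^p=-T\neq 0$, where $\mu=\frac1{\alpha+2}$; for this $h$ the equation collapses to~\eqref{eq:fin_k2_1}. An admissible root is one lying in $\F_{p^4}$ with $\gamma^{p^2}=-\gamma$, which I encode by writing $\gamma^p=\zeta\gamma$ for a fixed $\zeta$ with $\zeta^{p+1}=-1$; under this substitution~\eqref{eq:fin_k2_1} becomes the cubic~\eqref{eq:fin_k2_2}. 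Since $p\neq 2,3$, I can solve it by Cardano's formulas, obtaining the three roots as in~\eqref{eq:gammas_k2} through a single cube root $D$ with $D^3\in\F_{p^4}$, and I would first record that at least two of the roots are always distinct (if $\gamma_1=\gamma_2$ then $D^2=a_2$ forces $\gamma_3\neq\gamma_1$ unless $D=0$, which is impossible).

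The heart of the proof, and the step I expect to be the \emph{main obstacle}, is the rationality analysis of $D$. Imposing $\gamma_i^{p^4}=\gamma_i$ on one root and simplifying should reduce to a condition relating $D^{p^4}$ to $D$: either $D^{p^4}=D$ outright, or $D^{p^4+1}=a_2/\theta\in\F_{p^2}$. In the latter case I would combine $3\mid p^2-1$ with $D^3\in\F_{p^4}$ to get $D^{p^4+1}=D^2(D^3)^{(p^4-1)/3}\in\F_{p^4}$, hence $D^2\in\F_{p^4}$ and thus $D\in\F_{p^4}$; the only degenerate branch, $\theta^2 D^2+\theta a_2=0$, again forces $D^2\in\F_{p^2}$ and so $D\in\F_{p^4}$. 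In every case $D\in\F_{p^4}$, so as soon as one root is in $\F_{p^4}$ all three are.

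Finally, assembling these pieces yields the contradiction sought. For the chosen $h$, the analysis shows that the $\F_{p^4}$-rational roots of the cubic---equivalently the admissible values of $\gamma$---occur all together or not at all, with at least two of the three roots distinct; hence the equation $f(X)=g$ cannot possess a unique solution. This conflicts with the one-to-one requirement of a permutation, establishing that $f$ is not a permutation polynomial over $\F_{q^2}$ whenever $\alpha\neq -1$. The friction throughout will be bookkeeping the several Cardano sub-cases so that each still returns $D\in\F_{p^4}$, mirroring---but in a lighter form than---the analogous $k=3$ computation.
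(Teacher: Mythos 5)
You are retracing the paper's own proof of this case step for step---the Lemma~\ref{L1} reduction to the coset $-B/2+\F_{p^2}$, the passage to~\eqref{eq:fin3_2_2}, the choice of a special $h$, Cardano's formulas for the cubic~\eqref{eq:fin_k2_2}, and the rationality analysis of $D$---so the real question is whether the two delicate steps you import actually work; as written, they do not. The first problem is the normalization of $T$. For $k=2$ one has $t^{p^2}=-t$ for every $h\in\F_{p^4}\setminus\F_{p^2}$ (the paper itself records this), and $1-2\mu\in\F_{p^2}$, so $T=(1-2\mu)t$ \emph{automatically} satisfies $T^{p^2}=-T$. If in addition $T^p=-T$, then $T^{p^2}=(T^p)^p=T$, hence $2T=0$ and $T=0$. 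So there is no $h$ at all with $T^p=-T\neq 0$, and the linearized-polynomial norm argument borrowed from $k=3$ has nothing to produce. The condition that is actually available is $T^{p^2}=-T\neq 0$, which holds for any $h\notin\F_{p^2}$ with $h^{p^2}\neq -h$ (and $\alpha\neq 0$); the Cardano analysis can be redone under that weaker condition (now with $\omega=T^2\in\F_{p^2}$ rather than $\omega\in\F_p$), but your proof, taken literally, stalls at this step.

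The second problem is a genuine logical gap, and it sits exactly where you wrote ``equivalently'': the $\F_{p^4}$-rational roots of the cubic are \emph{not} the same as the admissible values of $\gamma$. A root $\gamma$ of~\eqref{eq:fin_k2_2} corresponds to a solution $X$ of $f(X)=g$ only when $\gamma^p=\zeta\gamma$ (equivalently, when $\lambda=B\gamma$ lies in $\F_{p^2}$), and this is strictly stronger than $\gamma^{p^4}=\gamma$. Indeed, for a root $\gamma$ of the cubic the left-hand side of~\eqref{eq:fin_k2_1} equals $(\gamma^p-\zeta\gamma)\bigl(\gamma^2-\frac{1-4\mu}{4}\bigr)$, so an $\F_{p^4}$-rational root of the cubic need not even satisfy~\eqref{eq:fin_k2_1}, much less be admissible. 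Consequently, ``all three roots lie in $\F_{p^4}$ and at least two are distinct'' does not exclude the possibility that exactly one of them satisfies $\gamma^p=\zeta\gamma$---that is, that $f(X)=g$ has exactly one preimage, which is precisely what a permutation would give, so no contradiction is reached. To finish, one must show that the number of roots satisfying the eigenvector condition, summed over all $\zeta$ with $\zeta^{p+1}=-1$, is either $0$ or at least $2$ for the chosen $g$; neither your proposal nor, in fairness, the paper's very terse conclusion of this section supplies that step, and supplying it is the actual mathematical content still required here.
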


\section*{Acknowledgements}
The third-named author (PS) would like to thank  the first-named author (DB) for the invitation at the Dipartimento di Matematica e Informatica  at Universit\`a degli Studi di Perugia, Italy, and the great working conditions while this paper was being written. The first-named author (DB) thanks the Italian National Group for Algebraic and Geometric Structures and their Applications (GNSAGA—INdAM) which supported the research.


\begin{thebibliography}{99}

\bibitem{AKBARY201151}
A. Akbary,  D. Ghioca, Q. Wang, {\it On constructing permutations of finite fields}, Finite Fields Appl.  {\bf 17}(1), 51--67 (2011).

\bibitem{AubryPerret}                    
Y. Aubry, M. Perret, {\it A Weil theorem for singular curves}, in: Arithmetic, geometry and coding theory, De Gruyter Proceedings in Mathematics (1996) 1--7.

\bibitem{BX18}
T. Bai, Y. Xia,  {\em A new class of permutation trinomials constructed from Niho exponents}, Cryptogr. Commun. {\bf 10}, 1023--1036 (2018).


\bibitem{CH04} 
R. S. Coulter, M. Henderson, {\it A note on the roots of trinomials over a finite field,} Bull. Austral. Math. Soc. {\bf 69}(3),  429--432 (2004).

\bibitem{MR1042981} 
W. Fulton, Algebraic curves, Advanced Book Classics, Addison-Wesley Publishing Company, Advanced Book Program, Redwood City, CA, 1989.

\bibitem{GR_JAA22} 
R. Gupta, A. Rai, {\it A note on a class of permutation trinomials,} J. Algebra Appl. {\bf 22}(8), 2350163 (2023).
 
\bibitem{Ha77}
R. Hartshorne, Algebraic geometry, Graduate Texts in Mathematics, no. 52, Springer--Verlag, New York-Heidelberg, 1977.

\bibitem{HKT13}
J.W.P. Hirschfeld, G. Korchm\'aros, F. Torres, Algebraic curves over a finite field, Princeton University Press, 2013.

\bibitem{HouBT} 
X. Hou, {\it A survey of permutation binomials and trinomials over finite fields,} in: Proceedings of the 11th International Conference on Finite Fields and Their Applications, Contemp. Math., Magdeburg, Germany, July 2013, 632 AMS 177--191 (2015).

\bibitem{Houp3} 
X. Hou, {\it Determination of a type of permutation trinomials over finite fields II,} Finite Fields Appl. {\bf 35}, 16--35 (2015).


\bibitem{LN97}
R. Lidl, H. Niederreiter, {\em Finite Fields (Ed. 2)}, Encycl. Math. Appl., vol.20, Cambridge Univ. Press, Cambridge, 1997.

\bibitem{MR1812309}
Y.H. Park, J.B. Lee, {\it Permutation polynomials and group permutation polynomials}, Bull. Austral. Math. Soc. {\bf 63}(1), 67--74 (2001).

\bibitem{rai} 
A. Rai, R. Gupta, {\it Further results on a class of permutation trinomials}, Cryptogr. Commun. {\bf  15}(4),  811--820 (2023).

\bibitem{ZWW20}
Y. Zheng, Q. Wang, W.  Wei, {\em On Inverses of Permutation Polynomials of Small Degree Over Finite Fields}, IEEE Trans. Inf. Theory {\bf 66}(2), 914--922 (2020).

\bibitem{MR2495253}
M.E. Zieve, {\it On some permutation polynomials over $\mathbb{F}_q$ of the form
             {$x^rh(x^{(q-1)/d})$}}, Proc. Amer. Math. Soc. {\bf 137}(7), 2209--2216 (2009).

\end{thebibliography}
\end{document}